\newcommand{\bu}{\boldsymbol{u}}
\newcommand{\bv}{\boldsymbol{v}}
\newcommand{\bw}{\boldsymbol{w}}
\renewcommand{\div}{\operatorname{div}}
\newcommand{\R}{\mathbb{R}}
\renewcommand{\SS}{\mathbb{S}}
\newcommand{\dsig}{\,\textup{d}\sigma}
\newcommand{\pdt}{\partial_t}
\newcommand{\ddt}{\frac{\textup{d}}{\textup{d}t}}
\newtheorem{theorem}{Theorem}
\newtheorem{lemma}{Lemma}
\newtheorem{proposition}{Proposition}
\newtheorem*{assumption*}{Assumptions}
\newtheorem{assumption}{Assumption}
\newtheorem{remark}{Remark}
\numberwithin{lemma}{section}
\numberwithin{proposition}{section}
\numberwithin{theorem}{section}
\numberwithin{equation}{section}
\newcommand{\bFormula}[1]{
	\begin{equation} \label{#1}}
	\newcommand{\eF}{\end{equation}}
\newcommand{\vu}{\vc{u}}
\newcommand{\vc}[1]{{\bf #1}}
\newcommand{\dt}{\,{\rm d} t }
\def\softd{{\leavevmode\setbox1=\hbox{d}%
		\hbox to 1.05\wd1{d\kern-0.4ex{\char039}\hss}}}
\begin{document}
\title{Blow-up criterion for the compressible Navier--Stokes system with inflow-outflow boundary conditions}
\author{Anna Abbatiello$^a$ \and Mostafa Meliani$^b$\footnote{Corresponding author. Email: \href{mm4138@bath.ac.uk}{mm4138@bath.ac.uk}} }
\date{}
\maketitle

\centerline{$^a$Department of Mathematics and Physics, University of Campania ``L. Vanvitelli''}

\centerline{Viale A. Lincoln 5, 81100 Caserta, Italy}

\medskip
\medskip

\centerline{Department of Mathematical Sciences, University of Bath, Bath, BA2 7AY, UK}
 
\medskip

\begin{abstract}
 We consider the compressible Navier-Stokes system in three dimensions with general inflow-outflow boundary conditions, meaning that we prescribe a boundary velocity which has a negative normal component and accordingly the density is prescribed on the inflow part of the boundary.  We establish a blow-up criterion in a class of strong solutions  in the $L^p-L^q$ framework. In particular assuming the boundedness of the quantities $(\varrho^{-1}, \bu)$ and of a suitable norm of $\nabla_x \varrho$ the solution remains regular and the blow-up does not occur. We develop the condition on $\nabla_x \varrho$ because we need a new approach in order to accommodate the inhomogeneous boundary conditions, as the standard estimates on the material time derivative work when the normal component of the boundary velocity is zero.
\end{abstract} 
\medskip

\noindent {\bf 2020 Mathematics Subject Classification}: 35Q30, 35Q35\\[0mm]

\noindent {\bf Keywords:} Navier--Stokes system, inflow-outflow boundaries, conditional existence, blow-up criterion

\section{Introduction}

This paper is concerned with a blow-up criterion for the isentropic compressible Navier-Stokes system in a three dimensional space domain with inhomogeneous boundary conditions. The Navier-Stokes system of equations describes the time evolution of the mass density $\varrho=\varrho(t,x)$ and the velocity $\bu=\bu(t,x)$ of a viscous compressible fluid occupying a bounded space domain $\Omega\subset\mathbb{R}^3$ during a time interval $(0,T)$ and it reads as follows
\begin{align}\label{NS:system_of_equations}
\partial_t \varrho + \div_x (\varrho \bu) &= 0,
\\
\partial_t(\varrho \bu) + \div_x (\varrho \bu \otimes \bu) +\nabla_x p(\varrho) & = \div_x \mathbb{S}(\nabla_x\bu) + \varrho \mathbf{g}.
\end{align}
The pressure is given through the relation $p(\varrho) = a \varrho^\gamma$ with $a>0$ and $\gamma>1$, while the viscous tensor $\SS(\nabla_x\bu)$ is given 
by the Newton's law
\begin{equation}\label{eq::viscosity_thing}
	\SS(\nabla_x\bu) =  2\mu \mathbb{D}_x\bu 
    + \lambda \div_x \bu \mathbb I
\end{equation}
with constant viscosity coefficients $\mu, \lambda$, satisfying $\mu\geq0$ and $\lambda+ \frac23 \mu\geq 0$. Also $\mathbb{D}_x \bu =\frac12 \nabla_x \bu +\frac12 (\nabla_x\bu)^t$ is the symmetrized gradient and $\mathbb{I}$ the identity matrix in $\R^3$. The system is endowed with initial conditions
$$\varrho(0, \cdot)=\varrho_0, \ \bu(0, \cdot)=\bu_0.$$
Finally, we impose a Dirichlet type boundary condition for the velocity and accordingly the mass inflow boundary condition
\begin{equation}\label{boundary}
    \bu_{|\partial \Omega}=\bu_B, \ \varrho_{|\Gamma_{\rm in}}=\varrho_B, \  \Gamma_{\textup{in}}=\{x\in \partial\Omega: \bu_B\cdot \mathbf{n}<0\}
\end{equation}
where $\bu_B=\vu_B(x)$ is a given field independent of time. 
We point out that we allow an inflow boundary, in the sense that in what follows $\Gamma_{\textup{in}}$ may be, in general, nonempty. \\
In the present work, we study the conditions that guarantee the global existence of strong solutions to the open system \eqref{NS:system_of_equations}--\eqref{boundary}. 
The local well-posedness of the open compressible Navier-Stokes system in the
$L^p-L^q$ class follows from the result established in \cite{meliani2025p}, where in particular the full Navier-Stokes system coupled to a magnetic induction model is considered. \\
Given the data in the underlined class, our aim is to establish estimates involving lower order norms implying that the local strong solution remains regular as soon as the identified quantities are bounded. Therefore the maximal existence time of the solution is either infinite or the recognized quantities blow-up.
Blow-up criteria for the compressible Navier-Stokes system with inhomogeneous boundary conditions have been studied in \cite{abbach25,abfe24, basaric2023conditional}, where they all assume that $\bu_B\cdot\mathbf{n}=0$. This assumption is crucial in the analysis. It also implies that the material time derivative of $\bu-\bu_B$ is zero on the boundary of the spatial domain. Both conditions are exploited in several parts when considering the estimates for the material time derivative as in the original spirit of Hoff \cite{hoff1995global}. 
Having a general boundary velocity implies that the estimate on the velocity material derivative may be achieved requiring that $\nabla_x \bu$ is bounded on the boundary, but then the higher order velocity material derivative estimate may be out of reach because of the need to deal both with $\nabla_x\varrho$ and with the gradient of the velocity material derivative on the boundary. We also explored the possibility to modify the material derivative in order to keep the property $D_t(\bu-\bu_B)_{|\partial\Omega}=0$ (where the symbol $D_t$ stands for the material derivative), but we ended with the need of controlling the second gradient of the velocity when computing the higher order material derivative estimate, which seems to be out of reach at that level. 
Therefore, we tackle the problem by invoking the $L^p-L^q$ regularity theory developed by Denk-Hieber-Pr\"{u}ss \cite{denk2007optimal} for parabolic initial-boundary value problem. In particular, we split the velocity into two parts: one solving the Lam\'{e} system and the remaining solving a parabolic problem with right-hand side. 
Another essential part is the way we treat boundary integrals on $\Gamma_{\rm in}$. As in \cite{meliani2025p}, we obtain an a priori estimate for the normal derivative of $\varrho-\varrho_B$ on $\Gamma_{\rm in}$ assuming that $\bu_B\cdot\mathbf{n}$ is bounded away from zero.
Finally, as in \cite{abfe24},  formulating a blow-up criterion in the $L^p-L^q$ class of regularity is also optimal because we require less regular data and less compatibility conditions.
\\
To our best knowledge, this is the first result on a blow-up criterion for the open compressible Navier-Stokes system with general boundary data.\\
Other references concerning conditional regularity and blow-up criteria for the compressible Navier-Stokes and for the compressible Navier-Stokes-Fourier system are e.g. \cite{fang2012blow, sun2011bealeJMPA, sun2011bealeARMA, fan2008blow, fan}. The interested reader may also check the references therein.

\section{Problem formulation and main result} \label{sec:Main_result}
In this work, we need a few different function spaces, among which Sobolev spaces $W^{k,p}$, Besov spaces $B^{s}_{pq}$, Triebel--Lizorkin spaces $F^s_{pq}$. The use of Besov and Triebel--Lizorkin spaces is the result of a quest of optimality in the $L^p$-$L^q$ framework by Denk--Hieber--Pr\"uss~\cite{denk2007optimal}. The interested reader may find the introduction to function spaces by  Triebel~\cite{triebel1983theory,triebel1992theory} useful. 

We begin by presenting the assumptions on the domain and on the initial and boundary data. The assumption on the inflow part of the boundary is justified by the discussion in \cite[Section 5]{meliani2025p}.
\begin{assumption}[Assumptions on the domain]\label{assu:domain}
	Let $\Omega$ be a bounded domain of $\R^3$  with boundary $\partial \Omega$ of class at least $C^2$. We assume that the inflow part of the boundary $\Gamma_{\textup{in}}$ is a closed surface of non-zero Hausdorff measure, i.e., $\mathcal{H}^{2} (\Gamma_{\textup{in}})\neq 0$.
\end{assumption}

\begin{assumption}[Assumptions on the boundary data]\label{assu:data}
	Let $p,q \in (1,\infty)$ satisfy 
    \[q>3, \qquad \max\left\{\frac{2q}{q-1}, \frac{2q}{2q-3}\right\}<p\] and let $T>0$. We assume that the boundary and initial data for the density satisfy
	\begin{equation}\label{data_density}
		\begin{aligned}
            &  \varrho_0 \in W^{1,q}(\Omega), \ \varrho_0>0,\\
			&  \varrho_B \in W^{1,q}(0,T; L^q(\Gamma_\textup{in})) \cap L^{q}(0,T; W^{1, q}(\Gamma_\textup{in})), \ \varrho_B>0,\\
            & 
            \varrho_0(x) = \varrho_B(x,0)   \ \textrm{for a.e.}\ x \in \Gamma_{\textup{in}},
		\end{aligned}
	\end{equation}
while we assume for the velocity field that
		\begin{equation}\label{data_velocity}
		\begin{aligned}
        	&\bu_0 \in B^{2(1-1/p)}_{qp}(\Omega),\\
            &\bu_B \in W^{2-1/q,q}(\partial \Omega),\\
            & \bu_0(x) = \bu_B(x)  \quad \ \textrm{for a.e.}\ x \in \partial \Omega,\\
			& - \bu_B \cdot  \mathbf{n}\geq c >0   \quad \ \textrm{for some }\ c>0 \ \textrm{ on } \Gamma_{\textup{in}}.
		\end{aligned}
	\end{equation}
	Finally, we consider the force $g \in L^p(0,T; L^q(\Omega))$.
\end{assumption}
 Note that to focus onto the difficulties of the problem we assume the boundary velocity $\bu_B$ to be independent of time.

We ensure ellipticity of the operator $\div_x(\SS(\nabla_x \, \cdot\, ))$ by imposing that the viscosity coefficients satisfy
\begin{equation}
    \mu>0, \qquad \lambda+ \frac{2}{3}\mu\geq 0.
\end{equation}

Our main result pertains to the conditional global well-posedness of \eqref{NS:system_of_equations}  with general Dirichlet boundary conditions \eqref{boundary}. It states the conditions for the unique continuation of the local solution $(\varrho,\bu)$ in the $L^p$--$L^q$ class determined in \cite{meliani2025p}. Before, we state the existence result which can be proved following the main steps in \cite{meliani2025p}.
\begin{theorem}
Let Assumptions~\ref{assu:domain} and~\ref{assu:data} hold. Then there exists a strictly positive time $T > 0$ such that the system \eqref{NS:system_of_equations}-\eqref{boundary} has a unique
solution
\begin{equation}\label{eq:def-Lp-Lqclass}
(\varrho, \bu) \in L^\infty(0,T; W^{1,q}(\Omega)) \cap W^{1,\infty}(0,T;L^q(\Omega)) \times L^p(0,T; W^{2,q}(\Omega)) \cap W^{1,p}(0,T;L^q(\Omega)).
\end{equation}
\end{theorem}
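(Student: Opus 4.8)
The plan is to construct the local solution by a Banach fixed-point argument that decouples the hyperbolic continuity equation from the parabolic momentum equation, as in the scheme of \cite{meliani2025p}. Given a candidate velocity $\bu$ in the class $X_T^{\bu} := L^p(0,T;W^{2,q}(\Omega)) \cap W^{1,p}(0,T;L^q(\Omega))$ matching the prescribed data, I would first solve the linear transport problem
\begin{equation}
\partial_t \varrho + \bu\cdot\nabla_x\varrho = -\varrho\,\div_x\bu, \qquad \varrho(0,\cdot)=\varrho_0, \qquad \varrho|_{\Gamma_{\textup{in}}}=\varrho_B,
\end{equation}
for the density $\varrho$, and then, feeding this $\varrho$ back as a coefficient, solve the linearized momentum equation for a new velocity $\tilde\bu$. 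The resulting solution map $\Phi:\bu\mapsto\tilde\bu$ (with $\varrho$ as intermediary) is shown to be a self-map of a suitable closed ball and a contraction on a short time interval, which yields both existence and uniqueness in the class \eqref{eq:def-Lp-Lqclass}.

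For the velocity step I would invoke the optimal $L^p$–$L^q$ maximal regularity theory of Denk–Hieber–Pr\"uss \cite{denk2007optimal} for the parabolic initial–boundary value problem governed by the Lam\'e operator $\div_x(\SS(\nabla_x\,\cdot\,))$, whose ellipticity is guaranteed by $\mu>0$, $\lambda+\tfrac23\mu\ge0$. Writing the momentum balance as
\begin{equation}
\varrho\,\partial_t\bu - \div_x\SS(\nabla_x\bu) = -\varrho\,\bu\cdot\nabla_x\bu - \nabla_x p(\varrho) + \varrho\,\mathbf{g}, \qquad \bu|_{\partial\Omega}=\bu_B, \qquad \bu(0,\cdot)=\bu_0,
\end{equation}
one treats the variable coefficient $\varrho$ multiplying $\partial_t\bu$ by a standard freezing-of-coefficients and perturbation argument, absorbing the lower-order difference into the right-hand side, which is small in $L^p(0,T;L^q(\Omega))$ on short time intervals. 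The regularity $\bu_0\in B^{2(1-1/p)}_{qp}(\Omega)$ is precisely the trace space associated with this maximal-regularity class, and the compatibility $\bu_0=\bu_B$ on $\partial\Omega$, together with the conditions $q>3$ and $p>2q/(q-1)$, supplies the necessary compatibility between initial and boundary data. One must also check that the forcing terms $\varrho\,\bu\cdot\nabla_x\bu$, $p'(\varrho)\nabla_x\varrho$ and $\varrho\,\mathbf{g}$ belong to $L^p(0,T;L^q(\Omega))$; here the embedding $W^{1,q}\hookrightarrow L^\infty$ for $q>3$, together with the control of $\nabla_x\varrho$ in $L^\infty(0,T;L^q)$ obtained in the transport step, is what makes the product estimates close.

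The delicate part is the transport step with genuine \emph{inflow}. Since $q>3$ yields $\bu\in L^p(0,T;C^1(\overline{\Omega}))$, the associated flow is well defined and the characteristics entering through $\Gamma_{\textup{in}}$ are controlled by the transversality hypothesis $-\bu_B\cdot\mathbf{n}\ge c>0$. Differentiating the equation and testing, I would propagate the $W^{1,q}$-regularity of $\varrho$ to obtain the bound in $L^\infty(0,T;W^{1,q}(\Omega))\cap W^{1,\infty}(0,T;L^q(\Omega))$, while the lower bound $\varrho>0$ is preserved along characteristics. The boundary datum $\varrho_B\in W^{1,q}(0,T;L^q(\Gamma_{\textup{in}}))\cap L^q(0,T;W^{1,q}(\Gamma_{\textup{in}}))$ and the compatibility $\varrho_0=\varrho_B$ on $\Gamma_{\textup{in}}$ ensure no spurious boundary layer forms at $t=0$. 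I expect the main obstacle to be the control of $\nabla_x\varrho$ up to the inflow boundary: the tangential derivatives follow from the regularity of $\varrho_B$, but the normal derivative on $\Gamma_{\textup{in}}$ must be recovered from the equation itself, and it is exactly here that the lower bound $-\bu_B\cdot\mathbf{n}\ge c>0$ is used to divide and close the estimate, as in \cite[Section 5]{meliani2025p}. Once both linear estimates hold with constants that, up to factors vanishing as $T\to0^+$, are uniform, the self-mapping and contraction properties of $\Phi$ follow for $T$ small, and uniqueness is obtained by applying the same contraction estimate to the difference of two solutions.
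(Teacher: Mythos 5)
Your proposal is correct and takes essentially the same approach as the paper: the paper does not reprove this theorem but defers to the local well-posedness scheme of \cite{meliani2025p}, which is precisely what you reconstruct --- a fixed-point iteration coupling a linear transport step for $\varrho$ (with the normal derivative on $\Gamma_{\textup{in}}$ recovered from the equation via the transversality bound $-\bu_B\cdot\mathbf{n}\geq c>0$) with Denk--Hieber--Pr\"uss maximal $L^p$--$L^q$ regularity for the linearized momentum equation. No gaps to flag beyond the level of detail inherent in a sketch.
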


The assumptions on $p,q$ are needed to show the local well-posedness of the MHD system~\cite{meliani2025p}. We do not underline any substantial difference in the proof.
\\
Our main theorem is the following:
\begin{theorem}\label{thm:main}
Let Assumptions~\ref{assu:domain} and~\ref{assu:data} hold. Let $(\varrho,\bu)$ be the local strong solution in the $L^p$--$L^q$ class \eqref{eq:def-Lp-Lqclass} and let $T^*$ be the first time such that the solution cannot be continued in this class to time $T=T^*$, then either $T^* =\infty$ or
\begin{equation}\label{blow_up_criterion}
\limsup_{t \to (T^*)^-} \left( \left\|\frac1 \varrho(t)\right\|_{L^\infty(\Omega)} + \|\bu(t)\|_{L^\infty(\Omega)} + \|\nabla \varrho(t)\|_{L^7(\Omega)} \right)=\infty.
\end{equation}
\end{theorem}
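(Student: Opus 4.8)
The plan is to argue by contraposition: assuming the three quantities in \eqref{blow_up_criterion} stay bounded by a constant $M$ on $[0,T^*)$, I would show that the full norm of $(\varrho,\bu)$ in the class \eqref{eq:def-Lp-Lqclass} remains bounded uniformly as $t\to(T^*)^-$. The local existence theorem, applied with data taken at a time close to $T^*$, then continues the solution past $T^*$, contradicting the maximality of $T^*$ and forcing $T^*=\infty$ unless the $\limsup$ is infinite.

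First I would pin down the density. The bound on $\|1/\varrho\|_{L^\infty}$ gives the lower bound $\varrho\ge 1/M$, while combining $\|\nabla\varrho\|_{L^7}\le M$ with the mass balance obtained by integrating the continuity equation (whose boundary flux is controlled through $\varrho_B$ and $\bu_B$) and the embedding $W^{1,7}(\Omega)\hookrightarrow L^\infty(\Omega)$, valid since $7>3$, yields a uniform upper bound $\varrho\le c_2$. Thus $0<c_1\le\varrho\le c_2$ and, in particular, the operator $\tfrac1\varrho\div_x\SS(\nabla_x\,\cdot\,)$ is uniformly elliptic. A standard energy estimate, with the inflow-outflow boundary terms handled using the boundedness of $\bu$ and $\varrho$ together with the data, provides the baseline control of $\bu$ in $L^2(0,T^*;W^{1,2})$ and of $\sqrt\varrho\,\bu$ in $L^\infty(0,T^*;L^2)$.

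Next I would bootstrap the velocity. Writing the momentum equation as the parabolic problem $\partial_t\bu-\tfrac1\varrho\div_x\SS(\nabla_x\bu)=\bfg-\bu\cdot\nabla_x\bu-\tfrac1\varrho\nabla_x p(\varrho)$ and splitting $\bu$ into a Lam\'e part carrying the boundary datum $\bu_B$ and a parabolic remainder with zero initial and boundary trace, I would invoke the Denk--Hieber--Pr\"uss maximal $L^p$--$L^q$ regularity, the crucial point being that the constant can be taken independent of the length of the time interval. On the right-hand side, $\bfg\in L^pL^q$ by assumption, the convective term is absorbed by interpolation using $\|\bu\|_{L^\infty}\le M$, and the pressure term obeys $\tfrac1\varrho\nabla_x p\sim\nabla_x\varrho$ because $\varrho$ is bounded above and below. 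Working first at an auxiliary integrability $r\in(3,7]$, the assumed bound $\nabla_x\varrho\in L^\infty(0,T^*;L^7)\hookrightarrow L^p(0,T^*;L^r)$ closes the estimate and gives $\bu\in L^p(0,T^*;W^{2,r})$; since $r>3$ we have $W^{2,r}\hookrightarrow W^{1,\infty}$, so that $\int_0^{T^*}\|\nabla_x\bu\|_{L^\infty}\,\D t<\infty$.

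It remains to upgrade $\nabla_x\varrho$ from $L^7$ to $L^q$, which I would do by differentiating the continuity equation and running an $L^q$ estimate on $\nabla_x\varrho$. The transport term produces a boundary integral over $\partial\Omega$: on the outflow part it has the favourable sign, while on $\Gamma_{\rm in}$ one controls the tangential derivatives of $\varrho$ by those of $\varrho_B$ and recovers the normal derivative from the continuity equation restricted to the boundary, using $-\bu_B\cdot\mathbf{n}\ge c>0$ to invert the coefficient — this is precisely the a priori estimate for $\partial_n(\varrho-\varrho_B)$ on $\Gamma_{\rm in}$. The resulting Gronwall inequality couples $\|\nabla_x\varrho\|_{L^\infty L^q}$ with $\|\nabla_x^2\bu\|_{L^1 L^q}$, while maximal regularity at level $q$ bounds $\|\bu\|_{L^pW^{2,q}}$ through $\|\nabla_x\varrho\|_{L^pL^q}$. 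Since the exponential factor in the Gronwall estimate is already controlled by $\int_0^{T^*}\|\nabla_x\bu\|_{L^\infty}\,\D t$ from the previous step, I would close this coupling on subintervals of a uniform length and iterate finitely many times across $[0,T^*)$, the trace property of the maximal-regularity space keeping the endpoint data bounded at each step. Recovering $\partial_t\varrho$ algebraically from the continuity equation then delivers the full bound in \eqref{eq:def-Lp-Lqclass}. I expect the main obstacle to be exactly this last coupled bootstrap at integrability $q$: reconciling the maximal-regularity estimate, which needs $\nabla_x\varrho$ in $L^pL^q$, with the transport estimate, which needs $\nabla_x^2\bu$ in $L^1L^q$, all while taming the inflow boundary trace — and it is here that the time-uniformity of the Denk--Hieber--Pr\"uss constant and the inflow bound $-\bu_B\cdot\mathbf{n}\ge c>0$ carry the essential weight.
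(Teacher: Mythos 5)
Your overall architecture (contraposition, parabolic reformulation with coefficient $\tfrac1\varrho$, maximal $L^p$--$L^q$ regularity, transport estimate for $\nabla\varrho$ with the inflow trick, absorption on subintervals of uniform length) is genuinely different from the paper's and, where it is complete, it is sound. You feed the assumed bound $\nabla\varrho\in L^\infty(0,T^*;L^7)$ \emph{directly} into the momentum source (since $\tfrac1\varrho\nabla_x p(\varrho)\sim\nabla_x\varrho$ once $\varrho$ is bounded above and below), obtaining $\bu\in L^p(0,T^*;W^{2,r})$ with $r=\min\{q,7\}>3$, hence $\nabla_x\bu\in L^1(0,T^*;L^\infty)$, which lets you close the density estimate with a plain exponential Gronwall argument. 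The paper never uses the $L^7$ bound this way: it splits $\bu=\bv+\bw$ with $\bv=L^{-1}\nabla_x p(\varrho)$ solving a Lam\'e system, controls $\nabla_x\bv$ only in $BMO$ (Proposition~\ref{lemma:Lamme_Lp_BMO}), replaces $\partial_t p(\varrho)$ via the renormalized pressure equation, and closes the density estimate through the Brezis--Gallouet--Wainger inequality (Lemma~\ref{lemma:log_ineq}) and a logarithmic Gronwall argument. Your route trades that machinery for a more elementary one; the paper's route confines the $L^7$ assumption to a single purpose (see below), which is what makes the relaxation discussed in its conclusions plausible.

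The genuine gap is that single purpose: you invoke Denk--Hieber--Pr\"uss for $\partial_t\bu-\tfrac1\varrho\div_x\SS(\nabla_x\bu)=F$ without ever verifying its hypothesis on the top-order coefficient. The theorem \cite[Theorem 2.3]{denk2007optimal} requires $\tfrac1\varrho\in C([0,T]\times\overline\Omega)$, and -- what actually matters for your argument -- the maximal-regularity constant depends on the modulus of continuity of this coefficient. On each compact $[0,T]\subset[0,T^*)$ continuity holds qualitatively because the solution lies in the class \eqref{eq:def-Lp-Lqclass}, but your contradiction argument needs constants that stay uniform as $T\to T^*$, hence a modulus of continuity for $\tfrac1\varrho$ on $[0,T^*)\times\overline\Omega$ depending only on $\bar r$, $\bar\bu$, $\bar G$ and the data. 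Establishing exactly this is the paper's central technical step (the subsection on regularity of the coefficients: energy bounds for $\div_x\bw$, negative-order Triebel--Lizorkin embeddings, the time-trace theorem, fractional Sobolev embedding), and it is the stated reason the $L^7$ gradient term appears in the criterion at all. In your scheme the spatial part is cheap ($W^{1,7}(\Omega)\hookrightarrow C^{0,4/7}(\overline\Omega)$ uniformly in $t$), but uniform continuity in time requires control of $\partial_t\varrho=-\bu\cdot\nabla_x\varrho-\varrho\div_x\bu$, i.e.\ of $\div_x\bu$ -- which your scheme only produces \emph{after} applying the maximal regularity you are trying to justify: a circularity. It can be broken, e.g.\ your baseline energy estimate gives $\div_x\bu\in L^2(0,T^*;L^2(\Omega))$ with uniform constant, hence $\varrho\in C^{0,1/2}([0,T^*];L^2(\Omega))$, and interpolating this with the uniform spatial H\"older bound yields a uniform space-time modulus of continuity; but this step must be carried out explicitly before Denk--Hieber--Pr\"uss may be applied, and as written your key estimate rests on an unverified hypothesis.
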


\begin{remark}[Blow-up criterion for $\varrho$] \label{rmk:no_varrho}
In this remark, we show that under the hypotheses of Assumptions~\ref{assu:domain} and \ref{assu:data}, the criterion~\eqref{blow_up_criterion} is equivalent to the blow-up criterion 
\[\label{blow_up_criterion_equiv}
\limsup_{t \to (T^*)^-} \left(\left\|\varrho(t)\right\|_{L^\infty(\Omega)} + \left\|\frac1 \varrho(t)\right\|_{L^\infty(\Omega)} + \|\bu(t)\|_{L^\infty(\Omega)} + \|\nabla \varrho(t)\|_{L^7(\Omega)} \right)=\infty,
\]
where the norm $\left\|\ \varrho(t)\right\|_{L^\infty(\Omega)}$ is present.

The implication \eqref{blow_up_criterion}$\implies$\eqref{blow_up_criterion_equiv} is obvious. The converse also holds. We prove it here by contradiction. 
To this end, we use \cite[Theorem 4.1.7]{hsiao2021boundary} which ensures, when $\mathcal{H}^2( \Gamma_\textup{in}) \neq 0$ (see Assumption~\ref{assu:domain}), that the following inequality holds:
\begin{equation}\label{equivalent_norm}
    \|\varrho(t)\|_{H^1(\Omega)} \leq C (\|\nabla \varrho(t)\|_{L^2(\Omega)} + \|\varrho_B\|_{L^2(\Gamma_\textup{in})}),
\end{equation}
for a.a. $t \in (0,T^*)$, for some $C>0$.
Since $\Omega$ is bounded:
\[\label{std_Lp_embdedding}
\|\nabla \varrho(t)\|_{L^7(\Omega)} \geq C_{r,\Omega} \|\nabla \varrho(t)\|_{L^r(\Omega)}, 
\]
for some $C_{r,\Omega}>0$, for all $1\leq r\leq 7$. Going back to \eqref{equivalent_norm} and using \eqref{std_Lp_embdedding} with $r=2$, we obtain that $\varrho \in L^{\infty}(0,T^*; H^1(\Omega))\hookrightarrow L^{\infty}(0,T^*; L^6(\Omega))$. We can then bootstrap our argument using \eqref{std_Lp_embdedding} with $r=6$, and conclude that  that 
\[\varrho \in L^{\infty}(0,T^*; W^{1,6}(\Omega))\hookrightarrow L^{\infty}(0,T^*; L^\infty(\Omega)).\]

This finishes the proof of the implication \eqref{blow_up_criterion_equiv}$\implies$\eqref{blow_up_criterion}.
\end{remark}

 The condition on the gradient ensures that the density (or more precisely its inverse $1/\varrho$) is continuous on the closure $[0,T]\times\overline\Omega$ which we need to apply the Denk--Hieber--Pr\"uss result \cite[Theorem 2.3]{denk2007optimal}. We note that assumptions on the boundedness of the gradient have been made in the context of studying the conditional regularity of compressible fluids; see, e.g., \cite{fan2008blow} and the references contained in \cite{fang2012blow}.

 We point out here that while we need to include the gradient as one of the components for the blow-up criterion in Theorem~\ref{thm:main}, the target space for the density can be arbitrarily more integrable,  by picking $|\nabla \varrho|^{q}$ with $q>7$ in \eqref{eq:def-Lp-Lqclass}.

\section{Auxiliary theoretical results}
\subsection{The Lam\'e system}\label{sec:lame_reg_results}
We provide some necessary estimates for the Lam\'e system. The results provided here were proved in e.g. \cite{agmon1959estimates}, \cite[Section 2]{sun2011bealeJMPA}, and \cite{acquistapace1992bmo}.

Consider the following abstract boundary value problem for the Lam\'e operator $L:= \mu \Delta_x + (\lambda + 2/3 \mu)\nabla_x\div_x $ with a general right-hand side $F$:
\begin{equation}\label{eq:Lame_abstract}
    \left\{
    \begin{array}{ll}
        L\,U = \div_x \mathbb{S}(\nabla_x U) = F & \quad \textrm{in } \Omega, \\
      U(x) = 0 &\quad \textrm{on } \partial\Omega.
    \end{array}
    \right.
\end{equation}
Here $U = (U_1,U_2,U_3)$ and $F = (F_1,F_2,F_3)$.  

\begin{proposition} \label{lemma:Lamme_Lp_BMO} Let $\Omega$ be a bounded Lipschitz domain and let $q \in (1,\infty)$. Let $U$ be a solution of \eqref{eq:Lame_abstract}. There exists a constant $C$ depending on $q$ and $\Omega$ such that the following estimates hold:
\begin{enumerate}[label=\arabic*)]
    \item If $F \in L^q(\Omega)$, then
    \begin{equation}\label{ineq:strong_ell_estimate}
            \|U\|_{W^{2,q}(\Omega)} \leq C \|F\|_{L^q(\Omega)}. 
    \end{equation}
    \item If $F \in W^{-1,q}(\Omega)$ (i.e. $F=\div_x g$ with $g = (g_{ij})_{3\times 3}$, $g_{ij} \in L^q(\Omega)$), then
    \begin{equation} \label{ineq:weak_ell_estimate}
            \|U\|_{W^{1,q}(\Omega)} \leq C \|g\|_{L^q(\Omega)}. 
    \end{equation}
        \item If $F =\div_x g$ with  $g_{ij} = \partial_k h_{ij}^k, \, h_{ij}^k \in L^q(\Omega),$ $i,j,k = 1,2,3$, then
    \begin{equation} \label{ineq:ultra_weak_ell_estimate}
            \|U\|_{L^{q}(\Omega)} \leq C \|h\|_{L^q(\Omega)}. 
    \end{equation}
    \item If $F =\div_x g$ with $g = (g_{ij})_{3\times 3}$, $g_{ij} \in L^\infty(\Omega)$, then
    \begin{equation} \label{ineq:weak_BMO_estimate}
            \|\nabla U\|_{BMO(\Omega)} \leq C \|g\|_{L^\infty(\Omega)}. 
    \end{equation}
\end{enumerate}
\end{proposition}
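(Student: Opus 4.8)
The four estimates share a single source, namely the strong Agmon--Douglis--Nirenberg (ADN) $L^q$ regularity theory for the Dirichlet problem \eqref{eq:Lame_abstract}. I would first record the structural facts that make this theory applicable: under the viscosity conditions $\mu>0$ and $\lambda+\frac23\mu\ge0$ the Lam\'e operator $L$ is strongly elliptic, since $\lambda+\frac23\mu\ge0$ forces $2\mu+\lambda>0$ and hence its principal symbol $\mu|\xi|^2\I+(2\mu+\lambda)\,\xi\otimes\xi/|\xi|^2\cdot|\xi|^2$ satisfies the Legendre--Hadamard condition; moreover the Dirichlet boundary operator satisfies the complementing (Lopatinskii--Shapiro) condition. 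With these in hand, estimate 1) is precisely the ADN a priori bound for an elliptic system on a domain with $C^2$ boundary (after the usual localisation and boundary flattening), and I would invoke it directly, citing \cite{agmon1959estimates} and \cite[Section 2]{sun2011bealeJMPA}.

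The weaker estimates 2) and 3) I would derive from 1) by duality, using that $L$ is formally self-adjoint for the Dirichlet problem: the bilinear form $U,V\mapsto\int_\Omega\SS(\nabla_x U):\nabla_x V$ is symmetric, so the adjoint of the Dirichlet realisation of $L$ is again $L$. For 3), given $\phi\in L^{q'}(\Omega)$ let $V$ solve $LV=\phi$ with $V|_{\partial\Omega}=0$; by 1) applied with exponent $q'$ one has $\|V\|_{W^{2,q'}(\Omega)}\le C\|\phi\|_{L^{q'}(\Omega)}$. The very weak (transposition) solution $U\in L^q$ of $LU=\div_x\div_x h$ with $U|_{\partial\Omega}=0$ is defined exactly by $\int_\Omega U\cdot\phi=\int_\Omega h:\nabla_x^2 V$ for all such test functions $V$; the boundary contributions are absorbed into the choice of test space $W^{2,q'}(\Omega)\cap W^{1,q'}_0(\Omega)$ and never appear explicitly, so H\"older's inequality gives $|\int_\Omega U\cdot\phi|\le\|h\|_{L^q}\|\nabla_x^2V\|_{L^{q'}}\le C\|h\|_{L^q}\|\phi\|_{L^{q'}}$, and taking the supremum over $\phi$ yields \eqref{ineq:ultra_weak_ell_estimate}. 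Estimate 2) is the intermediate case of the same mechanism: it is the negative-order ADN bound stating that the solution operator maps $W^{-1,q}(\Omega)\to W^{1,q}(\Omega)$, obtained from 1) by a single integration by parts (with the boundary term vanishing because the dual solution lies in $W^{1,q'}_0$).

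Finally, estimate 4) is the $q=\infty$ endpoint of 2): at the level of its kernel the map $g\mapsto\nabla_x U$ for divergence-form data is a Calder\'on--Zygmund singular integral operator associated with $L$, bounded on every $L^q(\Omega)$ with $1<q<\infty$. At the $L^\infty$ endpoint boundedness into $L^\infty$ fails but boundedness into $\mathrm{BMO}$ survives, giving $\|\nabla_x U\|_{\mathrm{BMO}(\Omega)}\le C\|g\|_{L^\infty(\Omega)}$; I would quote this from the BMO regularity theory for the Lam\'e system in \cite{acquistapace1992bmo}.

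The principal obstacle is the careful bookkeeping of boundary terms in the duality arguments for 2) and 3): the statements are only correct once $U$ is understood as a solution by transposition against the test space $W^{2,q'}(\Omega)\cap W^{1,q'}_0(\Omega)$, so that the repeated integrations by parts produce no uncontrolled boundary integrals---in particular no trace of $h$, which need not exist for $h\in L^q(\Omega)$. Secondary technical points are the verification of the complementing condition for $L$ with Dirichlet data, which underpins 1) and hence, through duality, all the remaining estimates, and the endpoint passage to $\mathrm{BMO}$ in 4), which is the genuinely delicate harmonic-analysis ingredient.
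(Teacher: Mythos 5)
Your proposal is correct and takes essentially the same route as the paper: the paper's proof is simply a citation to \cite[Propositions 2.1 and 2.2]{sun2011bealeJMPA}, whose underlying arguments are exactly the ADN $L^q$ theory for 1), the duality/transposition mechanism for 2)--3), and the BMO endpoint estimate from \cite{acquistapace1992bmo} for 4) that you sketch. The only notable difference is in 3): the paper remarks that the cited statement assumes $h_{ij}^k \in W^{1,q}_0(\Omega)$ and extends to $h_{ij}^k \in L^q(\Omega)$ by density, whereas your transposition formulation accommodates $L^q$ data directly---both resolutions are valid.
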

\begin{proof}
    The reader is referred to \cite[Proposition 2.1 and 2.2]{sun2011bealeJMPA} for a proof. The statement of \eqref{ineq:ultra_weak_ell_estimate} in \cite{sun2011bealeJMPA} assumes $h_{ij}^k \in W^{1,q}_0(\Omega)$ but the statement carries over to $h_{ij}^k \in L^q(\Omega)$ by density of $W_0^{1,q}(\Omega)$ in $L^q(\Omega)$.
\end{proof}

\subsection{A logarithmic inequality: Brezis--Gallouet--Wainger}
The first instance of use of a logarithmic Sobolev inequality we are aware of is due to Brezis and Gallou\"et~\cite{brezis1980nonlinear} which they exploited to show the global existence of solutions to a cubic nonlinear Schr\"odinger equation. Later Brezis and Wainger proved a number of logarithmic inequalities for (so-called) limiting cases of Sobolev embeddings~\cite{brezis1980note}. 

In this paper, we use a version of these logarithmic inequalities due to \cite{sun2011bealeJMPA} which allow us to use the BMO bound from Lemma~\ref{lemma:Lamme_Lp_BMO} in order to guarantee a bound in $L^\infty(\Omega)$ at only a logarithmic cost in $W^{1,q}(\Omega)$, as follows.
\begin{lemma}\label{lemma:log_ineq}
Let $\Omega$ be a bounded Lipschitz domain and  $f \in W^{1,q}(\Omega)$ with $q\in (3,\infty)$. There exists a constant $C$ depending on $q$ and the Lipschitz property of $\Omega$ such that
\begin{equation}
    \|f\|_{L^\infty(\Omega)} \leq C \left(1 + \|f\|_{BMO(\Omega)} \ln(e+\|\nabla f\|_{L^q(\Omega)})\right)
\end{equation}
    
\end{lemma}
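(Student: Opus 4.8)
The plan is to establish the pointwise estimate
\[
|f(x_0)| \le C\big(1 + \|f\|_{BMO(\Omega)}\,\ln(e + \|\nabla f\|_{L^q(\Omega)})\big)
\]
at an arbitrary point $x_0 \in \overline{\Omega}$ and then take the supremum. Since $q > 3$, the Morrey embedding $W^{1,q}(\Omega) \hookrightarrow C^{0,\alpha}(\overline{\Omega})$ with $\alpha = 1 - 3/q$ guarantees that $f$ is continuous on $\overline{\Omega}$, so $f(x_0)$ is meaningful. Set $R = \operatorname{diam}\Omega$ and, for $j = 0, 1, \dots, N$, consider the balls $B_j = B_{2^{-j}R}(x_0)$ together with the sets $\Omega_j = B_j \cap \Omega$; write $\bar f_j = |\Omega_j|^{-1}\int_{\Omega_j} f\dx$ for the average of $f$ over $\Omega_j$, noting $\Omega_0 = \Omega$. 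The starting point is the telescoping identity
\[
f(x_0) - \bar f_0 = \big(f(x_0) - \bar f_N\big) + \sum_{j=0}^{N-1}\big(\bar f_{j+1} - \bar f_j\big).
\]

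Next I would estimate the three contributions separately. For the consecutive averages I would use the Lipschitz measure-density (interior corkscrew) property of $\Omega$, namely $c_0 (2^{-j}R)^3 \le |\Omega_j| \le C_0 (2^{-j}R)^3$, which in particular gives the doubling bound $|\Omega_j| \le C_d\,|\Omega_{j+1}|$; then
\[
|\bar f_{j+1} - \bar f_j| = \left|\frac{1}{|\Omega_{j+1}|}\int_{\Omega_{j+1}}(f - \bar f_j)\dx\right| \le \frac{|\Omega_j|}{|\Omega_{j+1}|}\,\frac{1}{|\Omega_j|}\int_{\Omega_j}|f - \bar f_j|\dx \le C_d\,\|f\|_{BMO(\Omega)},
\]
so that the sum is bounded by $C_d\,N\,\|f\|_{BMO(\Omega)}$. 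For the smallest scale, the Morrey/Hölder estimate applied on $\Omega_N$ (with a constant uniform in $x_0$ and the scale, by the Lipschitz geometry) yields $|f(x_0) - \bar f_N| \le C\,(2^{-N}R)^{\alpha}\,\|\nabla f\|_{L^q(\Omega)}$. Finally, the largest-scale term $|\bar f_0| = \big||\Omega|^{-1}\int_\Omega f\big|$ is controlled by $\|f\|_{BMO(\Omega)}$, since on the bounded domain $\Omega$ the inhomogeneous $BMO$ norm dominates the mean.

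It then remains to optimize the number of scales. Collecting the three bounds gives
\[
|f(x_0)| \le C\,\|f\|_{BMO(\Omega)}\,(1 + N) + C\,(2^{-N}R)^{\alpha}\,\|\nabla f\|_{L^q(\Omega)}.
\]
I would choose $N$ to be the smallest nonnegative integer with $(2^{-N}R)^{\alpha}\|\nabla f\|_{L^q(\Omega)} \le 1$, which makes the last term at most $C$ and forces $N \le C\,(1 + \ln(e + \|\nabla f\|_{L^q(\Omega)}))$. Since $\ln(e+\cdot) \ge 1$, substituting this bound on $N$ and absorbing the additive constants into the logarithmic factor yields the claimed inequality after taking the supremum over $x_0 \in \overline{\Omega}$.

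The main obstacle is making the two local estimates uniform up to the boundary on a merely Lipschitz domain: the telescoping step requires the two-sided bound $|\Omega_j| \approx (2^{-j}R)^3$ (so that oscillations over $\Omega_{j+1}$ are genuinely controlled by the $BMO$ seminorm), while the smallest-scale step requires a Morrey constant independent of $x_0$ and of the dyadic level. Both are consequences of the Lipschitz (equivalently, $(\varepsilon,\delta)$ or John) structure, but they must be invoked with care near $\partial\Omega$. An alternative that sidesteps the boundary entirely is to first extend $f$ to $\mathbb{R}^3$ by a single operator bounded simultaneously on $W^{1,q}$ and on $BMO$ (Jones' extension for $(\varepsilon,\delta)$-domains) and to run the dyadic argument on full balls in $\mathbb{R}^3$, where doubling and the Morrey estimate are automatic; the price is verifying the existence of such a simultaneous extension for Lipschitz domains.
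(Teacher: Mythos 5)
The paper does not actually prove this lemma: its ``proof'' is a one-line citation of \cite[Lemma 2.3]{sun2011bealeJMPA}. Your dyadic telescoping argument is therefore a genuinely different, self-contained route, and it is correct in outline. The scheme---compare $f(x_0)$ with averages over $B_{2^{-j}R}(x_0)\cap\Omega$, bound consecutive differences of averages by the $BMO$ seminorm via doubling, bound the finest scale by a Morrey/Campanato oscillation estimate, then choose $N\sim \ln\big(e+\|\nabla f\|_{L^q(\Omega)}\big)$---is exactly the classical proof pattern for Brezis--Gallouet--Wainger/Kozono--Taniuchi type inequalities, here localized to a Lipschitz domain. The two technical points you flag are the right ones, and both do hold for Lipschitz domains with uniform constants: the measure-density bound $|B_r(x)\cap\Omega|\geq c\,r^3$ for $x\in\overline\Omega$, $r\leq \operatorname{diam}\Omega$ follows from the uniform interior cone condition, and the same cone/John structure yields a Sobolev--Poincar\'e (Campanato-type) estimate on $B_r(x)\cap\Omega$, uniform in $x$ and $r$, which is what your smallest-scale bound needs (note this must be invoked as an \emph{oscillation} estimate with only $\|\nabla f\|_{L^q}$ on the right-hand side, not as the full Morrey embedding, whose constant involves $\|f\|_{W^{1,q}}$; your statement is the correct one). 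What your route buys is a transparent constant (depending only on $q$, the Lipschitz character, and $\operatorname{diam}\Omega$) and no recourse to extension operators; what the citation buys is brevity, and your suggested alternative (a simultaneous $W^{1,q}$/$BMO$ extension followed by the whole-space inequality) is essentially the standard competing proof.

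One point deserves emphasis, because your proof silently hinges on it and the paper never defines $BMO(\Omega)$: you need the norm to be (i) built from mean oscillations over intersections $B_r(x)\cap\Omega$ (not only over balls contained in $\Omega$), so that the telescoping step near $\partial\Omega$ is legitimate, and (ii) \emph{inhomogeneous}, i.e.\ including an $L^p$-piece, so that $|\bar f_0|\lesssim \|f\|_{BMO(\Omega)}$; with the pure seminorm the inequality itself is false (take $f$ a large constant: the seminorm and $\nabla f$ vanish while $\|f\|_{L^\infty}$ is arbitrary). Both conventions are the ones used in \cite{sun2011bealeJMPA}, and the inhomogeneous norm is also what the Lam\'e estimate \eqref{ineq:weak_BMO_estimate} controls, so your proof establishes the lemma in exactly the form the paper uses it.
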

\begin{proof}
    The reader is referred to \cite[Lemma 2.3]{sun2011bealeJMPA} for a proof.
\end{proof}

Note that the fact that the (semi)norm in $W^{1,q}(\Omega)$ contributes only logarithmically allows us to use Gronwall's lemma. We refer the reader to \cite{brezis1980nonlinear} or \cite[Theorem 4.1]{meliani2024well} for how to apply Gronwall's lemma in this case. 

Note that employing the Sobolev embedding $W^{1,q}(\Omega) \hookrightarrow L^\infty(\Omega)$ instead of Lemma \ref{lemma:log_ineq}, the superlinear polynomial growth in the resulting inequality for the mass density would make impossible to rule out blow-up of the density. This is why it is important to use the logarithmic inequality from Lemma~\ref{lemma:log_ineq}.

\section{Velocity splitting}
There exist various strategies to split the velocity for the purpose of studying conditional existence for fluid flow systems; see, e.g., \cite{basaric2023conditional,sun2011bealeARMA,sun2011bealeJMPA}. The common idea behind the velocity splitting strategy is that the main difficulty comes from controlling the term $\nabla p(\varrho)$. By relying on splitting the velocity $\bu = \bw+\bv $ such that for
each $t\in [0,T)$, $\bv(t)$ solves the Lam\'e system:
\begin{equation} \label{eq:Lame_system}
\left\{\begin{array}{ll}
      L\,\bv: = \div_x \mathbb{S}(\nabla_x \bv) = \nabla p (\varrho), & \quad \textrm{in } \Omega, \\
      \bv(x) = 0, &\quad \textrm{on } \partial\Omega, 
\end{array}
\right.
\end{equation}
we are able to use available results for the existence of weak solutions for \eqref{eq:Lame_system} which only rely on the fact that $p(\varrho) \in L^p(\Omega)$ with $ p \in (1,\infty]$; see Section~\ref{sec:lame_reg_results} for precise statements related to the regularity of solution of \eqref{eq:Lame_system}. In what follows, we use for convenience the notation $L^{-1} f$ to refer to the solution of \eqref{eq:Lame_system} with right-hand side $f$.

Following the approach of \cite[Section 3]{sun2011bealeJMPA}, $\bw$ solve the parabolic initial-boundary value problem:
\begin{equation}
\label{eq:parabolic_equation_w}
    \left\{\begin{array}{ll}
    \pdt \bw - \frac1\varrho \div_x(\SS (\nabla_x \bw)) = F, & \quad \textrm{in } \Omega, \\
    \bw(t,x) = \bu_B, &\quad \textrm{on } [0,T)\times \partial\Omega,\\ 
    \bw(0,x) = \bw_0(x), &\quad \textrm{on } \Omega, 
    \end{array}
\right.
\end{equation}
where $\bw_0(x) = \bu_0(x) - L^{-1}(\nabla p(\varrho_0))$ and $F= -u \cdot \nabla u - L^{-1} \nabla\left(\pdt p(\varrho)\right) + g $. 
Expressing the problem for $\bw$ as a parabolic problem, and not as an elliptic problem, allows us to bypass the need to derive estimates for the material derivative ($D_t \bu := \pdt \bu + \bu\cdot \nabla \bu$) as done in, e.g., \cite{basaric2023conditional,sun2011bealeARMA,hoff1995global}. Indeed, for general Dirichlet boundary condition $\bu_B$, providing control on $\nabla D_t \bu$ seems out of reach; the proof of \cite{sun2011bealeARMA} relies on $\bu_B =0$, the estimates in \cite{basaric2023conditional} make use of  $\bu_B\cdot\mathbf{n} =0$. Note that the control of the gradient $\nabla D_t \bu$ is needed in order to use the embedding $H^1(\Omega) \hookrightarrow L^q(\Omega)$ for some $q>3$.

In order to allow for general Dirichlet boundary conditions, we rely, in this work, on $L^p$--$L^q$ theory (see, e.g., \cite{denk2007optimal}) to provide a different argument for the conditional existence of strong solutions of \eqref{NS:system_of_equations}.

\section{Estimates for the velocity}

The current section is dedicated to the proof of the following lemma which establishes conditional regularity estimates for the velocity components $\bw$ and $\bv$:
\begin{lemma}\label{lemma:estimates_velocity}
Let Assumptions~\ref{assu:domain} and \ref{assu:data} hold. Let $T>0$ and assume there exist positive constants $\bar r, \bar\bu, \bar G$ such that
\begin{equation}
\begin{aligned}\label{cond_reg_crit}
\sup_{t\in (0,T)} \left\|\frac 1\varrho(t)\right\|_{L^\infty(\Omega)} \leq  \bar r,\qquad
\sup_{t\in (0,T)} \|\bu(t)\|_{L^\infty(\Omega)} \leq  \bar\bu, \qquad \sup_{t\in (0,T)}\|\nabla \varrho\|_{L^\infty(\Omega)} \leq \bar G ,
\end{aligned}
\end{equation}
for some $\varepsilon>0$.
Then, the following estimates hold:
	\begin{equation}\label{w_velocity_ineq}
		\begin{aligned}
			\int_0^T \big[\| \pdt \bw \|^p_{L^q(\Omega)} + & \|\bw\|_{W^{2,q}(\Omega)}^p \big]\dt \leq  \Lambda(\bar\bu, \varrho_0, \bu_0, \bu_B,\bar r, \bar G, p, q,T)
		\end{aligned}
	\end{equation}
    where $\Lambda$ is an increasing function of the arguments  which stays bounded for bounded arguments.  Also there hold
\begin{align}
   & \|\nabla \bv \|_{L^\infty(0,T;BMO(\Omega))} \lesssim C(\bar G, \varrho_B), \label{v_velocity_ineq} \\
    &\|\pdt \bv \|_{L^p(0,T;L^q(\Omega))} \lesssim C(\bar G, \varrho_B,\bar \bu, \gamma, p, q, T) \left(\varepsilon\| \bw \|_{L^p(0,T; W^{2,q}(\Omega))}^p + 1  \right). \label{v_velocity_time_der}
\end{align}
\end{lemma}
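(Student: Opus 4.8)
The plan is to establish the three estimates in the order \eqref{v_velocity_ineq}, \eqref{v_velocity_time_der}, \eqref{w_velocity_ineq}, since the bounds on $\bv$ feed directly into the maximal-regularity estimate for $\bw$. The crucial preliminary reduction is that the hypotheses \eqref{cond_reg_crit} yield a pointwise-in-time $L^\infty$ bound on the density itself: since $\varrho = \varrho_B$ on the closed surface $\Gamma_{\textup{in}}$ of positive measure and $\|\nabla \varrho(t)\|_{L^\infty(\Omega)} \leq \bar G$, integrating along a path emanating from $\Gamma_{\textup{in}}$ gives $\|\varrho(t)\|_{L^\infty(\Omega)} \leq \|\varrho_B\|_{L^\infty(\Gamma_{\textup{in}})} + \bar G\,\mathrm{diam}(\Omega) =: C(\bar G, \varrho_B)$ (alternatively, one bootstraps \eqref{equivalent_norm} as in Remark~\ref{rmk:no_varrho}). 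Hence $p(\varrho) = a\varrho^\gamma$ and $p'(\varrho) = a\gamma\varrho^{\gamma-1}$ are bounded in $L^\infty$ by $C(\bar G, \varrho_B, \gamma)$. For \eqref{v_velocity_ineq}, I write $\nabla p(\varrho) = \div_x(p(\varrho)\mathbb{I})$, so that $\bv(t) = L^{-1}(\nabla p(\varrho(t)))$ solves \eqref{eq:Lame_system} with right-hand side in divergence form with potential $g = p(\varrho)\mathbb{I} \in L^\infty$; estimate \eqref{ineq:weak_BMO_estimate} then gives $\|\nabla\bv(t)\|_{BMO(\Omega)} \leq C\|p(\varrho(t))\|_{L^\infty(\Omega)} \leq C(\bar G, \varrho_B)$ uniformly in $t$. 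The same computation with \eqref{ineq:weak_ell_estimate}, plus Poincaré ($\bv = 0$ on $\partial\Omega$), yields the auxiliary bounds $\|\nabla\bv(t)\|_{L^q(\Omega)} + \|\bv(t)\|_{L^q(\Omega)} \leq C(\bar G, \varrho_B)$.

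For \eqref{v_velocity_time_der}, I note that since $L$ has time-independent coefficients and $\bv = 0$ on $\partial\Omega$ for every $t$, differentiating \eqref{eq:Lame_system} in time gives $\partial_t\bv = L^{-1}(\nabla\partial_t p(\varrho))$. The continuity equation gives $\partial_t p(\varrho) = -p'(\varrho)(\bu\cdot\nabla\varrho + \varrho\,\div_x\bu)$, and writing $\nabla\partial_t p(\varrho) = \div_x(\partial_t p(\varrho)\mathbb{I})$ and applying \eqref{ineq:weak_ell_estimate} produces $\|\partial_t\bv(t)\|_{L^q} \leq C\|\partial_t p(\varrho)(t)\|_{L^q}$. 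Bounding the right-hand side with the preliminary $L^\infty$ bounds together with $\|\bu\|_{L^\infty} \leq \bar\bu$ and $\|\nabla\varrho\|_{L^q} \leq C\bar G$ leaves only $\|\div_x\bu\|_{L^q} \leq \|\nabla\bw\|_{L^q} + \|\nabla\bv\|_{L^q}$; the second factor is controlled by the previous step, and a Gagliardo--Nirenberg interpolation followed by Young's inequality gives $\|\nabla\bw(t)\|_{L^q} \leq \varepsilon\|\bw(t)\|_{W^{2,q}} + C_\varepsilon\|\bw(t)\|_{L^q}$, where $\|\bw(t)\|_{L^q} \leq \|\bu(t)\|_{L^q} + \|\bv(t)\|_{L^q} \leq C(\bar\bu, \bar G, \varrho_B)$. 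This yields the pointwise bound $\|\partial_t\bv(t)\|_{L^q} \leq C(\varepsilon\|\bw(t)\|_{W^{2,q}} + 1)$; raising to the $p$-th power and integrating over $(0,T)$ gives \eqref{v_velocity_time_der} (after relabelling $\varepsilon^p \mapsto \varepsilon$).

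For \eqref{w_velocity_ineq}, the operator $\tfrac1\varrho \div_x(\SS(\nabla_x\,\cdot\,))$ is a second-order elliptic (Lamé-type) operator whose top-order coefficient $1/\varrho$ is, by the preliminary reduction, continuous on $[0,T]\times\overline\Omega$ — exactly the regularity needed to invoke the $L^p$--$L^q$ maximal regularity theory of Denk--Hieber--Pr\"uss \cite[Theorem 2.3]{denk2007optimal} for \eqref{eq:parabolic_equation_w}, giving
\[
\|\partial_t\bw\|_{L^p(0,T;L^q)} + \|\bw\|_{L^p(0,T;W^{2,q})} \leq C\big(\|F\|_{L^p(0,T;L^q)} + \mathcal{D}_0\big),
\]
where $\mathcal{D}_0$ collects the data-dependent norms of the boundary datum $\bu_B \in W^{2-1/q,q}(\partial\Omega)$ and of the initial datum $\bw_0 = \bu_0 - L^{-1}(\nabla p(\varrho_0))$, the latter lying in $B^{2(1-1/p)}_{qp}(\Omega)$ by Assumption~\ref{assu:data} and \eqref{ineq:strong_ell_estimate}. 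It then remains to estimate $\|F\|_{L^p(0,T;L^q)}$ with $F = -\bu\cdot\nabla\bu - L^{-1}\nabla(\partial_t p(\varrho)) + g$: the middle term is exactly $\partial_t\bv$ and is controlled by the previous step, the forcing $g$ is given in $L^p(0,T;L^q)$, and $\|\bu\cdot\nabla\bu\|_{L^q} \leq \bar\bu(\|\nabla\bw\|_{L^q} + \|\nabla\bv\|_{L^q})$ is handled exactly as above, again producing a term $\varepsilon\|\bw\|_{W^{2,q}}$ plus bounded quantities. Choosing $\varepsilon$ small relative to the maximal-regularity constant $C$ lets me absorb every $\varepsilon\|\bw\|_{L^p(0,T;W^{2,q})}^p$ contribution into the left-hand side, yielding \eqref{w_velocity_ineq}.

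The main obstacle is the application in the last step: verifying the hypotheses of the Denk--Hieber--Pr\"uss theorem for the variable-coefficient, non-divergence-form operator $\tfrac1\varrho\div_x(\SS(\nabla_x\,\cdot\,))$ with genuinely inhomogeneous Dirichlet data $\bu_B$ having a non-vanishing normal component, and in particular confirming that the mere continuity of $1/\varrho$ furnished by the $\nabla\varrho$ bound suffices to close the maximal-regularity estimate. This is precisely the point at which the new hypothesis on $\nabla\varrho$ is indispensable and where the classical material-derivative approach fails for $\bu_B\cdot\mathbf{n}\neq 0$. A secondary subtlety is the circular dependence between \eqref{w_velocity_ineq} and the $\bw$-dependent parts of \eqref{v_velocity_time_der} and of $F$, which is resolved by the interpolation-plus-absorption argument with a sufficiently small $\varepsilon$.
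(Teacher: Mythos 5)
Your treatment of \eqref{v_velocity_ineq} and \eqref{v_velocity_time_der} is sound and close to the paper's (the paper prefers the renormalized pressure equation $\pdt p(\varrho) + \div_x(p(\varrho)\bu) + (\varrho p'(\varrho)-p(\varrho))\div_x\bu = 0$ together with the ultra-weak Lam\'e estimate \eqref{ineq:ultra_weak_ell_estimate}, so that the term $\div_x(p(\varrho)\bu)$ costs no derivative of $\varrho$ at all, whereas you use the chain rule and pay with $\bu\cdot\nabla\varrho$ --- acceptable under the stated hypothesis on $\nabla\varrho$, but less robust). The genuine gap is in your proof of \eqref{w_velocity_ineq}: you assert that ``the top-order coefficient $1/\varrho$ is, by the preliminary reduction, continuous on $[0,T]\times\overline\Omega$.'' It is not. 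The preliminary reduction gives an $L^\infty$ bound on $\varrho$ and $1/\varrho$ and (at best) spatial Lipschitz continuity of $\varrho(t,\cdot)$, uniformly in $t$; this says nothing about continuity in \emph{time}, hence nothing about joint continuity on $[0,T]\times\overline\Omega$, which is what \cite[Theorem 2.3]{denk2007optimal} requires. Worse, the continuity must be \emph{quantitative}: the maximal-regularity constant depends on the modulus of continuity of the coefficient (through localization/coefficient-freezing), and for the continuation argument this modulus must be controlled by $\bar r,\bar\bu,\bar G$ and the data only --- not by the norms of the local solution, which may degenerate as $t\to T^*$. You flag this point yourself as ``the main obstacle'' but never close it, so the proof of \eqref{w_velocity_ineq} is incomplete exactly where the new difficulty of the paper lies.

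Closing it requires temporal regularity of $\varrho$, which via $\pdt\varrho = -\bu\cdot\nabla\varrho - \varrho\div_x\bu$ requires conditional control of $\div_x\bu = \div_x\bw + \div_x\bv$ --- and $\div_x\bw$ is precisely what the maximal-regularity estimate is supposed to deliver, a circularity that must be broken by an independent, lower-order estimate. This is what the paper's ``Regularity of the coefficients'' subsection does: it first proves an energy-type bound $\div_x\bw \in L^\infty(0,T;L^2(\Omega))\cap L^2(0,T;H^1(\Omega))$ (following \cite[Proposition 3.2]{sun2011bealeJMPA}, after lifting $\bu_B$ by a stationary Lam\'e solution $\bw_B$), then pushes this through interpolation and Triebel--Lizorkin embeddings into the continuity equation to get $\pdt\varrho$ in a negative-order space, and finally invokes the time-trace theorem \cite[Chapter III, Theorem 4.10.2]{amann1995linear} plus fractional Sobolev embedding to conclude $\varrho \in \operatorname{BUC}([0,T];W^{\frac{73-\varepsilon}{98},7}(\Omega)) \hookrightarrow C([0,T]\times\overline\Omega)$ with bounds depending only on the conditional quantities. (Under the lemma's literal hypothesis $\|\nabla\varrho\|_{L^\infty(L^\infty)}\leq \bar G$ a shorter route exists --- interpolate spatial Lipschitz regularity against Lipschitz-in-time regularity valued in $W^{-1,\infty}(\Omega)$, the latter coming from $\pdt\varrho = -\div_x(\varrho\bu)$ with $\varrho\bu\in L^\infty$ --- but you make no such argument, and it would not survive the weakening to the $L^7$ gradient bound that the main theorem actually uses.)
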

Before presenting the proof, let us note that due to Remark~\ref{rmk:no_varrho}, \eqref{cond_reg_crit} yields that there exist $\bar\varrho:=\bar\varrho(\bar G, \varrho_B)$, such that
\[\label{cond_reg_crit_varrho}
\sup_{t\in (0,T)} \|\varrho(t)\|_{L^\infty(\Omega)} \leq  \bar\varrho.\]
\begin{proof}
Estimate \eqref{v_velocity_ineq} is obtained using Lemma~\ref{lemma:Lamme_Lp_BMO} and in particular inequality \eqref{ineq:weak_BMO_estimate}. The rest of this section is dedicated to prove estimates~\eqref{w_velocity_ineq} and \eqref{v_velocity_time_der}.

In order to obtain estimates on the $\bw$ component of the velocity, we intend to apply Denk-Hieber-Pr\"uss result \cite[Theorem 2.3]{denk2007optimal} to the initial-boundary value problem~\eqref{eq:parabolic_equation_w}. In order to do this we need to verify that the assumptions of the aforementioned theorem hold. 

\subsection*{Initial and boundary data regularity}
The initial data are given by
\[\bw_0(x) = \bu_0(x) - L^{-1}(\nabla p(\varrho_0)).\]
We know that $\bu_0 \in B^{2(1-1/p)}_{qp}(\Omega)$ because of  Assumption~\ref{assu:data}. Also, since $\varrho_0 \in W^{1,q}$ then Proposition~\ref{lemma:Lamme_Lp_BMO} ensures that $L^{-1}(\nabla p(\varrho_0)) \in W^{2,q}(\Omega) \subset B^{2(1-1/p)}_{qp}(\Omega)$. 
Thus $\bw_0$ enjoys sufficient regularity to apply the $L^p$-$L^q$ theory of Denk--Hieber--Pr\"uss~\cite{denk2007optimal}.

The regularity of the boundary data is ensured by Assumption~\ref{assu:data}.

\subsection*{Regularity of the coefficients} 
In order to apply Denk--Hieber--Pr\"uss framework~\cite{denk2007optimal} we need to check the regularity of the coefficient $\frac{1}{\varrho}$ which appears in front of the term $\div_x(\SS(\nabla_x \bw))$ in \eqref{eq:parabolic_equation_w}. Note that it is not sufficient that $\frac{1}{\varrho}$ belongs to  $L^\infty(0,T; L^\infty(\Omega))$. Rather, to apply~\cite[Theorem 2.3]{denk2007optimal}, we need $\frac{1}{\varrho} \in C([0,T] \times \bar\Omega)$, which we ensure thanks to  the assumption that $\int_0^t \int_\Omega |\nabla \varrho|^{7} \leq \bar G$.

Indeed, we first extend the boundary data in the space-time cylinder $(0,T)\times \Omega$ 
by considering the solution to the elliptic equation:
	\begin{equation}
		\begin{aligned}
			 - \div_x(\SS (\nabla_x\bw_B)) &= 0 && \textrm{on }\   \Omega,\\
			\bw_B &= \bu_B && \textrm{on }\ \partial \Omega.
		\end{aligned}
	\end{equation}
with $\bw_B \in W^{2,q}(\Omega)$. Which is then used to reformulate problem~\eqref{eq:parabolic_equation_w} as a zero-Dirichlet boundary condition problem.
Then, following the steps of \cite[Proposition 3.2]{sun2011bealeJMPA} we can show that  
\begin{equation}
    \div \bw \in L^\infty(0,T;L^2(\Omega)) \cap L^2(0,T;H^1(\Omega)) \hookrightarrow L^7(0,T;H^{\frac{2-\varepsilon}7}(\Omega)),
\end{equation}
where the last embedding holds for all $\varepsilon>0$ by using the interpolation of $L^p$ spaces~\cite[Theorem 5.1.2]{bergh2012interpolation}. 
We then use the embedding \cite[Theorem 3.3.1]{triebel1983theory} 
\begin{equation}
    H^{\frac{2-\varepsilon}7}(\Omega) \cong F^{\frac{2-\varepsilon}7}_{2,2} (\Omega) \hookrightarrow F^{\frac{-11-\varepsilon}{14}}_{7,1} (\Omega).
\end{equation}
For the equivalence between the fractional Sobolev space and the Triebel--Lizorkin space above, we refer the reader to \cite[Section 2.3.5]{triebel1983theory}. Putting this together with estimate~\eqref{v_velocity_ineq} (using once again the embedding result \cite[Theorem 3.3.1]{triebel1983theory}), we obtain that $\div \bu \in L^7(0.T;F^{\frac{-11-\varepsilon}{14}}_{7,1}(\Omega))$.

Then, making use of the triangle inequality, we can ensure \[\label{triangle_ineq_rhot}\varrho_t = -\varrho \div \bu - \nabla \varrho \cdot \bu \in  L^7(0.T;F^{\frac{-11-\varepsilon}{14}}_{7,1}(\Omega)),\]
 with the help of H\"older's inequality and the regularities \eqref{cond_reg_crit}, \eqref{cond_reg_crit_varrho}.

We have so far shown that $\varrho \in W^{1,7}(0,T;F^{\frac{-11-\varepsilon}{14}}_{7,1}(\Omega)) \cap L^7(0,T;W^{1,7})$, then by the time-trace result \cite[Chapter III, Theorem 4.10.2]{amann1995linear} and real interpolation properties of Triebel--Lizorkin spaces~\cite[Section 3.3.6 \& Theorem 2.4.2]{triebel1983theory}
\begin{equation}
    \varrho \in \operatorname{BUC}([0,T]; B_{7,7}^{\frac{73-\varepsilon}{98}}(\Omega)) \cong \operatorname{BUC}([0,T]; W^{\frac{73-\varepsilon}{98},7}(\Omega))
\end{equation}
where $\operatorname{BUC}$ stands for the space of bounded uniformly continuous functions, and the last space equivalence is given by \cite[Section2.2.2/18]{triebel1983theory}. 
Finally, using embedding results for fractional Sobolev spaces~\cite[Theorem 8.2]{DiNezza2012}, we obtain that $\varrho \in C([0,T]\times \overline{\Omega})$ (and so is $1/\varrho$) which is the needed regularity to apply the desired $L^p$--$L^q$ framework.

We mention here that results due to Krylov~\cite{krylov2007parabolicelliptic,krylov2007parabolic} suggest that the condition on the continuity of the coefficient $\frac1\varrho$ could be relaxed to allow for $VMO$ coefficients. We recall that the $VMO$ space is the closure of continuous functions in $BMO$. 

\subsection*{Source term regularity}
Let $T>0$. We estimate the source term norm $\|F\|_{L^p(0,T;L^q(\Omega))}$. Recall that the right-hand side term of the equation for $\bw$ is given by 
\begin{equation}
    F= -\bu\cdot\nabla \bu - L^{-1} \nabla (\pdt p( \varrho) ) + g,
\end{equation}
where $\pdt \bv = L^{-1} \nabla (\pdt \varrho )$.
To bound the different components of $F$, we proceed as follows. First, for arbitrary $\varepsilon>0$, we have
\begin{equation}\label{eq:estimate_rhs_1}
    \begin{aligned}
    \|\bu\cdot\nabla \bu\|_{L^p(0,T;L^q(\Omega))}^p \leq & \|\bu\|_{L^\infty([0,T]\times\Omega)}^p \|\nabla \bu\|_{L^p(0,T;L^q(\Omega))}^p\\
    \leq & C(\varepsilon) \|u\|_{L^\infty(\Omega)}^{2p} + \varepsilon \|\bw\|^{2p}_{L^p(0,T;W^{1,q}(\Omega))} +\varepsilon \|\bv\|^{2p}_{L^p(0,T;W^{1,q}(\Omega))}
    \\
    \leq & C(\varepsilon, \bar \varrho, \bar \bu)  + \varepsilon \|\bw\|^{2p}_{L^p(0,T;W^{1,q}(\Omega))}
    \end{aligned}
\end{equation}
where we have used the velocity decomposition: $\bu = \bv + \bw$ and the estimate for the Lam\'e system~\eqref{ineq:weak_ell_estimate}.
Furthermore, we use that the interpolation theory result~\cite[Theorem 6.4.5]{bergh2012interpolation}:
\begin{equation}\label{interpolation_result}
W^{1,q}(\Omega) = (L^q(\Omega),W^{2,q}(\Omega))_{[1/2]}
\end{equation}
which ensures that 
\begin{equation}
\begin{aligned}\label{eq:estimate_rhs_1/2}
\|\bw\|^{2p}_{L^p(0,T;W^{1,q}(\Omega))} \lesssim & \|\bw\|^{p}_{L^p(0,T;L^{q}(\Omega))} \|\bw\|^{p}_{L^p(0,T;W^{2,q}(\Omega))} \\
\lesssim & \|\bw\|_{L^\infty([0,T]\times\Omega)}^p  \|\bw \|_{L^p(0,T;W^{2,q}(\Omega))}^p\\
    \lesssim & (\|\bu\|_{L^\infty([0,T]\times\Omega)} + \|\bv\|_{L^\infty([0,T]\times\Omega)})^p \|\bw \|_{L^p(0,T;W^{2,q}(\Omega))(\Omega)}^p \\
    \lesssim& (\bar \bu + \|\bv\|_{L^\infty(0,T;W^{1,q}(\Omega))})^p \|\bw \|_{L^p(0,T;W^{2,q}(\Omega))}^p
    \\
    \lesssim& (\bar \bu + \bar\varrho)^p \|\bw \|_{L^p(0,T;W^{2,q}(\Omega))}^p
\end{aligned}
\end{equation}
Thanks to the small parameter $\varepsilon$ in \eqref{eq:estimate_rhs_1}, we will be able to absorb the norm $\|\bw \|_{L^p(0,T;W^{2,q}(\Omega))}$ by setting $\varepsilon$ small enough after \eqref{w_velocity_first_ineq}.

For the second term, we use a trick inspired by \cite[Proposition 3.2]{sun2011bealeJMPA} to remove the need to have control over the time derivative $\pdt p(\varrho)$, injecting instead the renormalized density equation; see e.g.,~\cite[Section B.1.1]{feireisl2022mathematics}:
\begin{equation}
    \pdt p(\varrho) + \div_x(p(\varrho) \bu) + \left(\varrho p'(\varrho) - p(\varrho)\right) \div_x \bu = 0.
\end{equation}
Thus using the bounds for the Lam\'e system from Lemma~\ref{lemma:Lamme_Lp_BMO}, we obtain that:
\begin{equation}\label{eq:estimate_rhs_2}
    \begin{array}{rl}
          \|L^{-1} \nabla  \div_x(p(\varrho) \bu) \|_{L^q(\Omega)} \lesssim  & \|p(\varrho) \bu\|_{L^q(\Omega)} \lesssim \|\varrho\|_{L^\infty}^\gamma \|\bu\|_{L^\infty(\Omega)} ,\\
          \| L^{-1} \nabla \left[\left(\varrho p'(\varrho) - p(\varrho)\right) \div_x \bu \right]\|_{L^q(\Omega)} \leq & \| L^{-1} \nabla \left[\left(\varrho p'(\varrho) - p(\varrho)\right) \div_x \bu \right]\|_{W^{1,q}(\Omega)} \\
          \lesssim & C(\bar \varrho, \bar \bu, \gamma ) \left(\|\nabla \bw \|_{L^q(\Omega)} + \|\nabla \bv \|_{L^q(\Omega)}\|  \right)\\
          \lesssim &  C(\bar \varrho, \bar \bu, \gamma ) \left(\|\nabla \bw \|_{L^q(\Omega)} + 1  \right),
    \end{array}
\end{equation}
for almost every time $t\in(0,T)$, where in the last line we used the estimate \eqref{ineq:weak_ell_estimate} for the velocity $\bv$. We also use the interpolation result~\eqref{interpolation_result} which ensures that 
\begin{equation}\label{eq:estimate_rhs_3}
\begin{aligned}
    \|\nabla \bw \|_{L^q(\Omega)} \leq & \| \bw \|_{L^q(\Omega)}^{1/2} \|\bw \|_{W^{2,q}(\Omega)}^{1/2} \\\leq & C(\varepsilon) \|\bw\|_{L^\infty(\Omega)} + \varepsilon \|\bw \|_{W^{2,q}(\Omega)}\\
    \leq& C(\varepsilon) (\|\bu\|_{L^\infty(\Omega)} + \|\bv\|_{L^\infty(\Omega)}) + \varepsilon \|\bw \|_{W^{2,q}(\Omega)} \\
    \leq& C(\varepsilon) (\bar \bu + \|\bv\|_{W^{1,q}(\Omega)}) + \varepsilon \|\bw \|_{W^{2,q}(\Omega)}
    \\
    \leq& C(\varepsilon) (\bar \bu + \bar\varrho) + \varepsilon \|\bw \|_{W^{2,q}(\Omega)}.
\end{aligned}
\end{equation}
Combining, \eqref{eq:estimate_rhs_2} and \eqref{eq:estimate_rhs_3} we find that 
\begin{equation}\label{eq:estimate_rhs_4}
    \begin{array}{rl}
          \|L^{-1} \nabla  \div_x(p(\varrho) \bu) \|_{L^p(0,T;L^q(\Omega))}^p \leq  & C(\bar \varrho, \bar \bu, \gamma, p, q, T)\\
          \| L^{-1} \nabla \left[\left(\varrho p'(\varrho) - p(\varrho)\right) \div_x \bu \right]\|_{L^p(0,T;L^q(\Omega))}^p \leq & C(\bar \varrho, \bar \bu, \gamma, p, q, T) \left(\varepsilon\| \bw \|_{L^p(0,T; W^{2,q}(\Omega))}^p + 1  \right),
    \end{array}
\end{equation}

Now using the $L^p-L^q$ existence result from \cite[Theorem 2.3]{denk2007optimal}, we obtain the following estimate for $\bw$:
	\begin{equation}\label{w_velocity_first_ineq}
		\begin{aligned}
			\int_0^T \big[\| \pdt \bw \|^p_{L^q(\Omega)} + & \|\bw\|_{W^{2,q}(\Omega)}^p \big]\dt   \\ \leq & \Lambda(\bar\varrho,\bar\bu,r_0, p, q,T)  \Bigg[  \int_0^T \|F\|^p_{L^q(\Omega)} +  \| \bu_B\|^p_{W^{2-1/q,q}(\partial \Omega)} \dt+ \\
            & \hphantom{\Lambda(\bar\varrho,\bar\bu,r_0, p, q,T)\Lambda(\bar\varrho,\bar\bu)}+ 
             \|\bu_0 - L^{-1}(\nabla p(\varrho_0))\|_{B^{2(1-1/p)}_{qp}(\Omega)}^p 
            \Bigg],
		\end{aligned}
	\end{equation}
where we stressed the dependence of $\Lambda$ on the data of the problem as well as on the magnitude of the coefficient $\frac 1 \varrho$; see, e.g., \cite{krylov2007parabolicelliptic,krylov2007parabolic}. 
The term $\int_0^T \|F\|^p_{L^q(\Omega)}$ is estimated using the bounds \eqref{eq:estimate_rhs_1}, \eqref{eq:estimate_rhs_1/2}, and \eqref{eq:estimate_rhs_4} and by choosing $\varepsilon$ small enough so that the $\varepsilon$-terms are absorbed by the left-hand side. Thus, we finish the proof of \eqref{w_velocity_ineq}.

Note, additionally, that combining \eqref{eq:estimate_rhs_4} with the fact that 
 $$\pdt \bv = L^{-1} \nabla (\pdt \varrho ) = L^{-1} (\div_x(p(\varrho) \bu) + \left(\varrho p'(\varrho) - p(\varrho)\right)),$$ and exploiting the Lam\'e system estimate~\eqref{ineq:ultra_weak_ell_estimate} yields inequality~\eqref{v_velocity_time_der}.
\end{proof}

\section{Estimate on the density}

Note that Lemma~\ref{lemma:estimates_velocity} yields control over $\bw$ in $L^p(0,T;W^{2,q}(\Omega)) \hookrightarrow L^1(0,T; W^{1,\infty} (\Omega))$ (since  $q>{\rm dim}=3$),  this regularity is employed for ensuring the existence of the characteristics (streamlines) necessary to solve the continuity equation. For $\bv$, while \eqref{v_velocity_ineq} does not offer sufficient control, the space $BMO(\Omega)$ is good enough combined with the Brezis--Gallou\"et--Wainger inequality of Lemma~\eqref{lemma:log_ineq} for the purposes of establishing the desired estimates for the mass density $\varrho$.

\begin{lemma}\label{lemma:mass_density}
    Under the assumptions of Lemma~\ref{lemma:estimates_velocity}, we obtain the following estimate on the mass density $\varrho$:
    \begin{equation}\label{rho_estimate}
        \|\varrho\|_{L^\infty(0,T;W^{1,q}(\Omega))}^q \leq \Gamma(\bar\bu, \varrho_0, \bu_0, \bu_B,\bar r, \bar G, p, q,T),
    \end{equation} 
    where $\Gamma$ is an increasing function of the arguments but which stays bounded for bounded arguments.
\end{lemma}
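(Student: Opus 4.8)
The plan is to derive a differential inequality for $\|\nabla\varrho(t)\|_{L^q(\Omega)}$ by differentiating the continuity equation in space and testing against $|\nabla\varrho|^{q-2}\nabla\varrho$. Applying $\nabla$ to $\pdt\varrho+\div_x(\varrho\bu)=0$ gives the transport-type identity
\[\pdt\nabla\varrho + (\bu\cdot\nabla)\nabla\varrho = -(\nabla\bu)^{t}\nabla\varrho - (\div_x\bu)\,\nabla\varrho - \varrho\,\nabla\div_x\bu,\]
and multiplying by $q|\nabla\varrho|^{q-2}\nabla\varrho$, integrating over $\Omega$, and integrating the convective term by parts produces (rigorously via the renormalized/characteristic formulation, which the regularity class justifies)
\[\frac{\dd}{\dd t}\|\nabla\varrho\|_{L^q}^q + \int_{\partial\Omega}(\bu_B\cdot\mathbf{n})\,|\nabla\varrho|^q\dsig \lesssim \|\nabla\bu\|_{L^\infty}\,\|\nabla\varrho\|_{L^q}^q + \bar\varrho\,\|\nabla\varrho\|_{L^q}^{q-1}\,\|\nabla\div_x\bu\|_{L^q}.\]
The argument then splits into controlling the interior terms on the right, the boundary integral, and finally closing a logarithmic Gronwall inequality.

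For the interior terms I exploit the splitting $\bu=\bv+\bw$ from Section~4. The $\bw$-contributions are harmless: since $q>3$ the embedding $W^{2,q}(\Omega)\hookrightarrow W^{1,\infty}(\Omega)$ gives $\|\nabla\bw\|_{L^\infty}+\|\nabla\div_x\bw\|_{L^q}\lesssim\|\bw\|_{W^{2,q}}$, which by Lemma~\ref{lemma:estimates_velocity} lies in $L^p(0,T)\subset L^1(0,T)$. The delicate contribution is that of $\bv$, for which no $L^\infty$ bound on $\nabla\bv$ is available; here I use the $BMO$ estimate \eqref{v_velocity_ineq}, the Brezis--Gallou\"et--Wainger inequality (Lemma~\ref{lemma:log_ineq}), and the strong Lam\'e estimate \eqref{ineq:strong_ell_estimate}, which yields $\|\bv\|_{W^{2,q}}\lesssim\|\nabla p(\varrho)\|_{L^q}\lesssim\|\nabla\varrho\|_{L^q}$ thanks to $\varrho\le\bar\varrho$. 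Consequently
\[\|\nabla\bv\|_{L^\infty}\lesssim 1 + \|\nabla\bv\|_{BMO}\,\ln\!\big(e+\|\nabla\varrho\|_{L^q}\big)\lesssim 1 + C(\bar G,\varrho_B)\,\ln\!\big(e+\|\nabla\varrho\|_{L^q}\big),\]
so the $\bv$-part of the first interior term contributes the factor $\|\nabla\varrho\|_{L^q}^q\ln(e+\|\nabla\varrho\|_{L^q})$ that the Gronwall step can absorb, while the $\bv$-part of the second interior term is simply $\lesssim\|\nabla\varrho\|_{L^q}^q$.

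The genuinely new difficulty, and the step I expect to be the main obstacle, is the boundary integral $\int_{\partial\Omega}(\bu_B\cdot\mathbf{n})|\nabla\varrho|^q$, which does not vanish because $\bu_B\cdot\mathbf{n}\neq 0$. On the outflow set $\{\bu_B\cdot\mathbf{n}\ge 0\}$ the sign is favourable and the term is discarded; the problem is $\Gamma_{\rm in}$, where it appears with a bad sign. There I split $\nabla\varrho$ into tangential and normal parts: the tangential part equals $\nabla_{\rm tan}\varrho_B$ and is pure data, while the normal derivative is recovered from the continuity equation restricted to $\Gamma_{\rm in}$,
\[(\bu_B\cdot\mathbf{n})\,\partial_{\mathbf{n}}\varrho = -\pdt\varrho_B - \bu_B\cdot\nabla_{\rm tan}\varrho_B - \varrho_B\,\div_x\bu \qquad\text{on }\Gamma_{\rm in}.\]
Since $-\bu_B\cdot\mathbf{n}\ge c>0$ by Assumption~\ref{assu:data}, I may divide and bound $|\partial_{\mathbf{n}}\varrho|$ on $\Gamma_{\rm in}$ by the prescribed data together with $|\div_x\bu|$, whose trace I control through $\div_x\bu=\div_x\bv+\div_x\bw$ and a trace/interpolation inequality in terms of the interior $W^{2,q}$-norms of $\bv$ and $\bw$. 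This is precisely the point where $\bu_B\cdot\mathbf{n}$ must be bounded away from zero, and it is delicate both because the inflow geometry enters here and because the trace of $\div_x\bu$ must be estimated in a norm that remains time-integrable given the available $L^p(0,T;W^{2,q})$ control.

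Collecting all the estimates I arrive at a differential inequality of the form
\[\frac{\dd}{\dd t}\,\|\nabla\varrho\|_{L^q}^q \lesssim \big(e+\|\nabla\varrho\|_{L^q}^q\big)\Big(\ln\!\big(e+\|\nabla\varrho\|_{L^q}^q\big) + a(t)\Big) + b(t),\]
where $a,b\in L^1(0,T)$ are assembled from the $\bw$- and data-contributions (using $\|\bw\|_{W^{2,q}}\in L^p\subset L^1$). Setting $y(t)=\ln(e+\|\nabla\varrho(t)\|_{L^q}^q)$ linearises this into $y'\lesssim y + a(t)+b(t)$, to which Gronwall's lemma applies and delivers a finite bound on $[0,T]$ depending only on the data and the a priori constants $\bar r,\bar\bu,\bar G$; this is the closure mechanism already invoked after Lemma~\ref{lemma:log_ineq}. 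Finally, since $\|\varrho\|_{L^\infty(0,T;L^q)}\le|\Omega|^{1/q}\bar\varrho$ by \eqref{cond_reg_crit_varrho}, the bound on $\nabla\varrho$ upgrades to the claimed estimate on $\|\varrho\|_{L^\infty(0,T;W^{1,q}(\Omega))}$, with $\Gamma$ inheriting the monotone dependence on its arguments.
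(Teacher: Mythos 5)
Your proposal follows the paper's proof in every structural respect: the same starting differential inequality for $\|\nabla\varrho\|_{L^q(\Omega)}^q$ (differentiate the continuity equation, test with $|\nabla \varrho|^{q-2}\nabla\varrho$, keep the boundary flux term); the same splitting $\bu=\bv+\bw$ with $W^{2,q}(\Omega)\hookrightarrow W^{1,\infty}(\Omega)$ and $L^p(0,T)\subset L^1(0,T)$ for the $\bw$-terms, and the $BMO$ bound plus Brezis--Gallou\"et--Wainger inequality for the $\bv$-terms; the same handling of the boundary integral (discard the outflow part by sign, tangential derivative equals data, normal derivative recovered from the continuity equation restricted to $\Gamma_{\textup{in}}$ together with $-\bu_B\cdot\mathbf{n}\geq c>0$); and the same logarithmic Gronwall closure.

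There is, however, one genuine gap, located exactly at the step you flag as ``delicate'' but do not resolve: the control of the trace of $\div_x \bw$ on $\Gamma_{\textup{in}}$ in a norm that is $L^q$ \emph{in time}. Your boundary estimate requires $\|\varrho_B \div_x \bu\|_{L^q(0,t;L^q(\Gamma_{\textup{in}}))}$, and you propose to bound the $\bw$-contribution by ``a trace/interpolation inequality in terms of the interior $W^{2,q}$-norms''. But Lemma~\ref{lemma:estimates_velocity} only gives $\bw \in L^p(0,T;W^{2,q}(\Omega))$, so a purely spatial trace inequality yields at best $L^p$ integrability in time of the trace, which is insufficient whenever $p<q$ --- a case permitted by Assumption~\ref{assu:data} (e.g.\ $q=4$ with $p$ slightly above $8/3$). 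The paper closes this by also using the second half of the maximal regularity estimate, $\pdt \bw \in L^p(0,T;L^q(\Omega))$: the class $L^p(W^{2,q})\cap W^{1,p}(L^q)$ embeds into $C([0,t];B^{2(1-1/p)}_{qp}(\Omega))$, so the trace of the divergence lies in $C([0,t];B^{1-\frac2p-\frac1q}_{qp}(\Gamma_{\textup{in}}))\cap L^p(0,t;W^{1-\frac1q,q}(\Gamma_{\textup{in}}))$, and real interpolation between the $C$-in-time and $L^p$-in-time components lands in $L^q(0,t;B^{1-\frac2p+\frac1q}_{qq'}(\Gamma_{\textup{in}}))\hookrightarrow L^q(0,t;L^q(\Gamma_{\textup{in}}))$; see \eqref{divx_embedding}--\eqref{eq:estimate_divv_rho}. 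The positivity of the smoothness index $1-\frac2p+\frac1q$ is precisely where the lower bound on $p$ in Assumption~\ref{assu:data} enters. Without this (or an equivalent) argument, the boundary source term $b(t)$ in your Gronwall step is not known to be integrable and the differential inequality cannot be closed; the $\bv$-contribution is unproblematic since $\|\div_x\bv\|_{L^q(0,t;L^q(\Gamma_{\textup{in}}))}$ is controlled by $\|\bv\|_{L^q(0,t;W^{2,q}(\Omega))}\lesssim \big(\int_0^t \|\nabla\varrho\|_{L^q(\Omega)}^q \dt\big)^{1/q}$ and feeds back into the Gronwall integrand. Everything else in your proposal is sound.
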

\begin{proof}
The starting point of our proof will be \cite[Lemma 5.2]{meliani2025p}. Indeed, Assumptions~\ref{assu:data} and \ref{assu:domain} are sufficient for the computations of \cite[Section 5.1]{meliani2025p} to be valid. The proof in \cite{meliani2025p} is based on an idea of \cite{fiszdon1983initial} to use the natural coordinate system near the boundary formed of the orthonormal vectors $n$ (outward normal) and $\{\tau_i\}_{1 \leq i \leq d-1}$ (tangential unit vectors). This allows to split the gradient along the boundary in order to control the boundary term in \cite[p. 13]{meliani2025p}:
\begin{equation}
	\begin{aligned}\label{ineq:density}
\ddt \|D^\alpha\varrho\|_{L^q(\Omega)}^q \leq &  q  \|D^\alpha \bu\|_{L^\infty(\Omega)} \|\nabla_x \varrho\|_{L^q(\Omega)} \|D^\alpha \varrho\|_{L^q(\Omega)}^{q-1}  - \int_{\partial\Omega} \bu \cdot n (|D^\alpha \varrho|^q) \dsig \\ &+ \|\div_x \bu\|_{L^\infty(\Omega)} \|D^\alpha\varrho\|_{L^q(\Omega)}^{q} \\ &
+  \|\bu\|_{W^{2,q}(\Omega)} \|D^\alpha\varrho\|_{L^q(\Omega)}^{q-1} \|\varrho\|_{L^\infty(\Omega)}. 
	\end{aligned}
\end{equation}
Above $\alpha$ is a multi-index with $|\alpha|\leq 1$, thus $D^\alpha$ are the different first-order spatial derivatives.

For the convenience of the reader we recall here the details of bounding the derivative term $|D^\alpha \varrho|$ on the boundary, particularly on the inflow part of the boundary. Note that on $\partial\Omega \setminus \Gamma_\textup{in}$, the normal component of the velocity is nonnegative $\bu\cdot n\geq 0$, such that:
\[- \int_{\partial\Omega \setminus \Gamma_\textup{in}} \bu \cdot n (|D^\alpha \varrho|^q) \dsig \leq 0.\]

\subsection*{Controlling the boundary term.}

In the tangential directions $\{\tau_i\}_{1 \leq i \leq d-1}$, control of the derivative $D^\alpha\varrho$ on $\Gamma_\textup{in}$ is directly given by the regularity of the boundary condition $\varrho_B$ and that of the velocity field $\bu$:
\begin{equation}\label{ineq:control_tan}
	\begin{aligned}
		- \int^t_0 \int_{\Gamma_{\textup{in}}}  \bu \cdot n \left|\frac{\partial}{\partial \tau_i}\varrho\right|^q \dsig \dt   = &- \int^t_0 \int_{\Gamma_{\textup{in}}} \bu \cdot n \left|\frac{\partial}{\partial \tau_i}\varrho_B\right|^q \dsig \dt \\ \leq & \| \bu_B \cdot n\|_{L^\infty(\Gamma_{\textup{in}})} \|\varrho_B\|_{L^q(0,T;W^{1,q}(\Gamma_{\textup{in}}))}^q,
	\end{aligned}
\end{equation}
where $i\in \{1, \ldots ,d-1\}.$

To establish an estimate in the normal direction to the boundary, we lift the boundary condition $\varrho_B$ to a function on $\Omega$ (which we still denote $\varrho_B$). 
Let $\tilde{\varrho} = \varrho - \varrho_B$, then $\tilde{\varrho}$, verifies:
\begin{equation}\label{eq:homogeneous_prob}
	\pdt \tilde\varrho + \bu \cdot \nabla_x \tilde\varrho + \div_x \bu \tilde\varrho = -\pdt \varrho_B - \bu \cdot \nabla_x \varrho_B - \div_x \bu \varrho_B.
\end{equation}
Formally, evaluating \eqref{eq:homogeneous_prob} on $\Gamma_{\textup{in}}$, we obtain:
\begin{equation}\bu \cdot \nabla_x \tilde\varrho = -\pdt \varrho_B - \div_x (\varrho_B \bu)
\end{equation}
owing to  $\tilde\varrho |_{\Gamma_{\textup{in}}} = 0$. We further simplify by noticing that the tangential component of $\nabla \tilde{\varrho} $ is equal to $0$, such that 
\[\bu \cdot \nabla_x \tilde\varrho = (\bu \cdot n) (\nabla_x \tilde\varrho \cdot n) = (\bu \cdot n) \frac{\partial}{\partial n} \tilde\varrho,\]
this provides control on the normal derivatives as long as $\bu\cdot n$ is bounded away from $0$ on $\Gamma_{\textup{in}}$. Indeed, we can test the resulting equation on the boundary by $\left|\frac{\partial}{\partial n}\tilde\varrho\right|^{q-2} \frac{\partial}{\partial n}\tilde\varrho$ and integrate over the inflow part of the boundary ($\Gamma_{\textup{in}}$) and in time on $(0,t)$, such that  we obtain
\begin{equation}
	\begin{aligned}
		\int^t_0 \int_{\Gamma_{\textup{in}}} (- \bu \cdot n) \left|\frac{\partial}{\partial n}\tilde\varrho\right|^q \dsig \dt =   \int^t_0 \int_{\Gamma_{\textup{in}}} \left( \pdt \varrho_B + \varrho_B \div_x \bu + \bu \cdot \nabla_x \varrho_B \right) \left|\frac{\partial}{\partial n}\tilde\varrho\right|^{q-2} \frac{\partial}{\partial n}\tilde\varrho \dsig \dt&\\
		\leq  C(\varepsilon) \int^t_0 \int_{\Gamma_{\textup{in}}} \left| \pdt \varrho_B + \varrho_B \div_x \bu + \bu \cdot \nabla_x \varrho_B \right|^q \dsig \dt+ \varepsilon \int^t_0 \int_{\Gamma_{\textup{in}}}  \left|\frac{\partial}{\partial n}\tilde\varrho\right|^{q}  \dsig \dt,&\\
	\end{aligned}  
\end{equation}
where the last line was obtained using Young's inequality with exponents $q$ and $\frac{q}{q-1}$. The constant $\varepsilon>0$ is arbitrary, $C(\varepsilon)>0$ is a $\varepsilon$-dependent constant. Now, we use the lower bound of the velocity on the inflow part of the boundary $c>0$ as expressed in \eqref{data_velocity}:
\begin{equation}
	\begin{aligned}
		c \int^t_0 \int_{\Gamma_{\textup{in}}} \left|\frac{\partial}{\partial n}\tilde\varrho\right|^q \dsig \dt
		\leq & C(\varepsilon) \int^t_0 \int_{\Gamma_{\textup{in}}} \left| \pdt \varrho_B + \varrho_B \div_x \bu + \bu \cdot \nabla_x \varrho_B \right|^q \dsig \dt+ \varepsilon \int^t_0 \int_{\Gamma_{\textup{in}}}  \left|\frac{\partial}{\partial n}\tilde\varrho\right|^{q}  \dsig \dt,\\
	\end{aligned}  
\end{equation}
thus choosing $\varepsilon$ smaller than the constant $c$ (e.g., $\frac c2$), we obtain that 
\begin{equation}\label{est_boundary_in}
	\begin{aligned}
		\left(\int^t_0 \int_{\Gamma_{\textup{in}}} \left|\frac{\partial}{\partial n}\tilde\varrho\right|^q \dsig \dt\right)^{1/q} & \leq \left(\frac{2}{c}\right)^{1/q} \|\pdt \varrho_B + \bu \cdot \nabla_x\varrho_B + \div_x \bu \varrho_B \|_{L^q(0,t;L^q(\Gamma_{\textup{in}}))} \\
		&\lesssim \|\pdt \varrho_B\|_{L^q(0,t;L^q(\Gamma_{\textup{in}}))} + \|\bu_B\|_{L^\infty(\Gamma_{\textup{in}})} \|\varrho_B\|_{L^q(0,t;W^{1,q}(\Gamma_{\textup{in}}))} \\ 
		& \qquad + \|\div_x \bu \varrho_B \|_{L^q(0,t;L^q(\Gamma_{\textup{in}}))}.
	\end{aligned}
\end{equation}

\subsubsection*{Estimating $ \|\div_x \bu \varrho_B \|_{L^q(0,t;L^q(\Gamma_{\textup{in}}))}$.}
We present the estimate procedure for the more involved case $p<q$. Indeed, the estimate \eqref{eq:estimate_divv_rho} below holds in a more elementary way for the case $p\geq q$.

First, using trace spaces of Sobolev spaces, we identify the space to which the restriction of $\div_x \bu$ to $\Gamma_{\textup{in}}$ belongs. Then, owing to \cite[Chapter III, Theorem 4.10.2]{amann1995linear} (see also \cite[Corollary 3.12.3]{bergh2012interpolation}), we know that 
\begin{equation}\label{divx_embedding}
	\div_x \bu \in C([0,t]; B^{1-\frac2p -\frac1q}_{qp} (\Gamma_{\textup{in}})) \cap L^p(0,t; W^{1-\frac1q,q} (\Gamma_{\textup{in}})) \hookrightarrow L^q (0,t; B^{1-\frac2p + \frac 1q }_{qq'} (\Gamma_{\textup{in}})),
\end{equation}
and 
$$\varrho_B \in C([0,t]; B^{1-\frac 1q}_{qq} (\Gamma_{\textup{in}})) \cap L^{q}(0,T; W^{1, q}(\Gamma_\textup{in})),$$
where for the embedding in \eqref{divx_embedding} we used real interpolation with the parameters $0<p/q<1$ and $q'$, where $q'$ is the conjugate H\"older exponent of $q$. (Note that interpolation of $L^p$ spaces (time component) is simply obtained using H\"older's inequality).

We note that $q>d\geq 2 >q'$ and that $1-\frac2p + \frac 1q  \geq 0$ due to Assumption~\ref{assu:data}. Thus using standard properties of Besov spaces~\cite[Section 2.3.2, Propostition 2]{triebel1983theory} (see also the remark after \cite[Proposition 3.2.4]{triebel1983theory} and \cite[Proposition 3.3.1]{triebel1983theory}), we know the following embeddings hold:
$$B^{1-\frac2p + \frac 1q}_{qq'} (\Gamma_{\textup{in}}) \hookrightarrow B^{0}_{qq'}  (\Gamma_{\textup{in}}) \hookrightarrow F^{0}_{q2}  (\Gamma_{\textup{in}}) = L^q (\Gamma_{\textup{in}}).$$

On the other hand, we also have the embedding
$B^{1-\frac 1q }_{qq} (\Gamma_{\textup{in}}) \hookrightarrow C(\Gamma_{\textup{in}}) \hookrightarrow L^\infty(\Gamma_{\textup{in}}),$ due to the condition $q>d$.

Putting these embeddings together, we can estimate the last term in \eqref{est_boundary_in} as 
\begin{equation}\label{eq:estimate_divv_rho}
	\|\div_x \bu \varrho_B \|_{L^q(0,t;L^q(\Gamma_{\textup{in}}))}\lesssim  \|\div_x \bu  \|_{L^q(0,t;L^q(\Gamma_{\textup{in}}))}\|\varrho_B\|_{L^\infty(0,t;L^\infty(\Gamma_{\textup{in}}))}.
\end{equation}	

\color{black}

Now, going back to \eqref{ineq:density}, and combining it with estimates~\eqref{est_boundary_in} and \eqref{eq:estimate_divv_rho}, we obtain after integration on $(0,T)$ 
\begin{equation}
	\begin{aligned}
		 |\varrho|_{W^{1,q}(\Omega)}^q \lesssim \,& |\varrho_0|_{W^{1,q}(\Omega)}^q + \int_0^T |\bu|_{W^{1,\infty}(\Omega)} |\varrho|_{W^{1,q}(\Omega)}^q \dt\\ & + \int_0^T \|\div_x \bu\|_{L^\infty(\Omega)} |\varrho|_{W^{1,q}(\Omega)}^{q} \dt \\ &
		+  \int_0^T\|\bu\|_{W^{2,q}(\Omega)} |\varrho|_{W^{1,q}(\Omega)}^{q-1} \|\varrho\|_{L^\infty(\Omega)} \dt \\& 	+ 
		\| \bu_B \cdot n\|_{L^\infty(\Gamma_{\textup{in}})} \|\varrho_B\|_{L^q(0,T;L^q(\Gamma_{\textup{in}}))}^q\\&+  \|\pdt \varrho_B\|^q_{L^q(0,t;L^q(\Gamma_{\textup{in}}))} + \|\bu_B\|_{L^\infty(\Gamma_{\textup{in}})}^q \|\varrho_B\|_{L^q(0,t;W^{1,q}(\Gamma_{\textup{in}}))}^q \\ &+\|\div_x \bu  \|^q_{L^q(0,t;L^q(\Gamma_{\textup{in}}))}\|\varrho_B\|^q_{L^\infty(0,t;L^\infty(\Gamma_{\textup{in}}))}. 		
	\end{aligned}
\end{equation}

Now we use the decomposition $\bu = \bw + \bv$, and that according to Lemma~\ref{lemma:Lamme_Lp_BMO}:
\begin{equation}
    \|\bv\|_{W^{2,q}} \lesssim \|\nabla \varrho \|_{L^q(\Omega)}.
\end{equation}
Note that this also gives us control over the boundary term:
\begin{equation}
   \|\div_x \bv  \|^q_{L^q(0,t;L^q(\Gamma_{\textup{in}}))}  \lesssim \|\bv\|_{L^q(0,T;W^{2,q})}^q \lesssim \int_0^T \|\nabla \varrho \|_{L^q(\Omega)}^q \dt
\end{equation}
Furthermore, we know, through Lemma~\ref{lemma:log_ineq}, that 
\begin{equation}
    \|\div_x \bv\|_{L^\infty(\Omega)} \leq |\bv|_{W^{1,\infty}(\Omega)}  \lesssim C( 1+ \bar\varrho \ln(e+ \|\nabla \varrho\|_{L^q(\Omega)})),
\end{equation}
where $\bar\varrho$ is as in \eqref{cond_reg_crit_varrho}.

\noindent Thus, putting the above inequalities together, and taking advantage of the estimates of Lemma~\ref{lemma:estimates_velocity}, we obtain:
\begin{equation}
        \begin{aligned}
		 |\varrho|_{W^{1,q}(\Omega)}^q \lesssim \,& C_1+  C_2\int_0^T |\varrho|_{W^{1,q}(\Omega)}^{q} \dt + C_3 \int_0^T |\varrho|_{W^{1,q}(\Omega)}^{q} \ln(e+ |\varrho|_{W^{1,q}(\Omega)}) \dt  \\
         \lesssim \,& C_1+  (C_2 + C_3) \int_0^T |\varrho|_{W^{1,q}(\Omega)}^{q} \ln(e+ |\varrho|_{W^{1,q}(\Omega)}) \dt 
	\end{aligned}
\end{equation}
where the constants $C_1$, $C_2$ depend on the problem data as well as on $\bar \bu$, $\bar r$, $p$, $q$, and $T$. We have used, in the last line, that $\ln(e+ |\varrho|_{W^{1,q}(\Omega)})\geq 1$.

We then apply Gronwall's Lemma along the lines of \cite{brezis1980nonlinear} and obtain boundedness of the density as long as $T<\infty$. 
\end{proof}

Having obtained a bound on the $\varrho$ in $L^\infty(0,T;W^{1,q}(\Omega))$, we use Lemma~\ref{lemma:Lamme_Lp_BMO} to improve the component $\bv$ of the velocity. Indeed, thanks to \eqref{ineq:strong_ell_estimate}, $\bv \in L^\infty(0,T;W^{2,q}(\Omega))$ and 
\begin{equation}\label{bootstrap_estimate_bv}
    \| \bv \|_{L^\infty(0,T;W^{2,q}(\Omega))} \leq \|\varrho\|_{L^\infty(0,T;W^{1,q}(\Omega))}.
\end{equation}
Putting together estimates~\eqref{w_velocity_ineq}, \eqref{v_velocity_time_der} and \eqref{bootstrap_estimate_bv}, leads to the conclusion that the velocity $\bu$ is bounded in the space 
\[L^p(0,T; W^{2,q}(\Omega)) \cap W^{1,p}(0,T;L^q(\Omega)),\]
which embeds in the space $C([0,T]; B^{2(1-1/p)}_{qp} (\Omega))$; see, e.g., \cite[Chapter III, Theorem 4.10.2]{amann1995linear}.

Finally, by writing:
\begin{equation}
\pdt \varrho = -  \varrho \div_x \bu + \nabla\varrho \cdot \bu,
\end{equation}
and using the fact that $\bu \in C([0,T]; B^{2(1-1/p)}_{qp} (\Omega))$, with $p>2$ (see conditions on $p$ in Section~\ref{sec:Main_result}), we readily show that $\pdt \varrho$ is bounded in $L^\infty(W^{1,q}(\Omega))$.

The proof of Theorem~\ref{thm:main} is thus completed.
\section*{Conclusions and perspectives}

In this work, we have established a first blow-up criterion for the compressible Navier--Stokes system with inflow-outflow boundary conditions. To overcome the challenges posed by the inflow-outflow setting, our approach made use of the $L^p$--$L^q$ framework as a way to obtain the necessary estimates on the velocity ($\div \bu$ `almost' in $L^1(L^\infty(\Omega))$) which were then combined with density estimates taking advantage of logarithmic Sobolev inequalities (Brezis--Gallouet--Wainger inequality). 

Our approach has one major drawback, that is, we need to ensure that $\frac1\varrho$ remains continuous on the closure $[0,T] \times \overline \Omega$. We achieved this by assuming boundedness of the gradient  of the density (i.e. $\|\nabla \varrho\|_{L^\infty(L^7)} $). We believe that future work should be tasked with relaxing this condition, eventually by only imposing boundedness of the  gradient of the density on the inflow part of the boundary $\Gamma_\textup{in}$. Achieving such a result would have the great advantage of unifying the theory of blow-up criteria for both inflow and no-inflow cases. 

Other questions of interest, concern extending the result to the Navier--Stokes--Fourier model and to equations of magnetohydrodynamics. 

\section*{Declarations}
The research of A.A. is performed under the auspices of GNFM--INdAM.\\
The research was conducted while M.M. was a postdoctoral researcher at the Institute of Mathematics of the Academy of Sciences of the Czech Republic. His work was supported by the Czech Sciences Foundation (GA\v CR), Grant Agreement 24-11034S. The Institute of Mathematics of the Academy of Sciences of the Czech Republic is supported by
RVO:67985840. \\[0mm]

\noindent The authors state that there is no conflict of interest.\\[0mm]

\noindent No data are associated with this article

\section*{Acknowledgements}
 The authors are grateful to Prof. Eduard Feireisl (AV\v CR) for helpful discussions during the preparation of this work and for suggestions leading to the simplification of the blow-up criterion.

\bibliography{references}{}

@article{basaric2023conditional,
  title={Conditional regularity for the {Navier--Stokes--Fourier} system with {D}irichlet boundary conditions},
  author={Basari{\'c}, Danica and Feireisl, Eduard and Mizerov{\'a}, Hana},
  journal={Journal of Differential Equations},
  volume={365},
  pages={359--378},
  year={2023},
  publisher={Elsevier}
}

@article{abbach25,
 AUTHOR = {Abbatiello, Anna and Basari\'c, Danica and Chaudhuri, Nilasis},
     TITLE = {On a blow-up criterion for the {N}avier-{S}tokes-{F}ourier
              system under general equations of state},
   JOURNAL = {Nonlinear Anal. Real World Appl.},
  FJOURNAL = {Nonlinear Analysis. Real World Applications. An International
              Multidisciplinary Journal},
    VOLUME = {84},
      YEAR = {2025},
     PAGES = {Paper No. 104328, 11}}

@article{abfe24,
 title={Local existence and conditional regularity for the
{N}avier-{S}tokes-{F}ourier system driven by inhomogeneous
boundary conditions},
  author={Abbatiello, Anna and Basari{\'c}, Danica and Chaudhuri, Nilasis and Feireisl, Eduard}, 
 journal={ArXiv Preprint, arXiv:2409.13459},
year={2024}}

@article{hoff1995global,
  title={Global solutions of the {N}avier-{S}tokes equations for multidimensional compressible flow with discontinuous initial data},
  author={Hoff, David},
  journal={Journal of Differential Equations},
  volume={120},
  number={1},
  pages={215--254},
  year={1995},
  publisher={Elsevier}
}

@article{fang2012blow,
  title={A blow-up criterion for two dimensional compressible viscous heat-conductive flows},
  author={Fang, Daoyuan and Zi, Ruizhao and Zhang, Ting},
  journal={Nonlinear Analysis: Theory, Methods \& Applications},
  volume={75},
  number={6},
  pages={3130--3141},
  year={2012},
  publisher={Elsevier}
}

@article {Fan,
    AUTHOR = {Fan, Jishan and Jiang, Song and Ou, Yaobin},
     TITLE = {A blow-up criterion for compressible viscous heat-conductive
              flows},
   JOURNAL = {Ann. Inst. H. Poincar\'{e} C Anal. Non Lin\'{e}aire},
  FJOURNAL = {Annales de l'Institut Henri Poincar\'{e} C. Analyse Non Lin\'{e}aire},
    VOLUME = {27},
      YEAR = {2010},
    NUMBER = {1},
     PAGES = {337--350}}

@article{sun2011bealeJMPA,
  title={A {Beale--Kato--Majda} blow-up criterion for the 3-D compressible {Navier--Stokes} equations},
  author={Sun, Yongzhong and Wang, Chao and Zhang, Zhifei},
  journal={Journal de Math{\'e}matiques Pures et Appliqu{\'e}es},
  volume={95},
  number={1},
  pages={36--47},
  year={2011},
  publisher={Elsevier}
}

@article{sun2011bealeARMA,
  title={A {Beale--Kato--Majda} criterion for three dimensional compressible viscous heat-conductive flows},
  author={Sun, Yongzhong and Wang, Chao and Zhang, Zhifei},
  journal={Archive for Rational Mechanics and Analysis},
  volume={201},
  pages={727--742},
  year={2011},
  publisher={Springer}
}

@article{denk2007optimal,
	title={Optimal {$L^p\--L^q$}-estimates for parabolic boundary value problems with inhomogeneous data},
	author={Denk, Robert and Hieber, Matthias and Pr{\"u}ss, Jan},
	journal={Mathematische Zeitschrift},
	volume={257},
	number={1},
	pages={193--224},
	year={2007},
	publisher={Springer}
}

@article{agmon1959estimates,
  title={Estimates near the boundary for solutions of elliptic partial differential equations satisfying general boundary conditions. {I}},
  author={Agmon, Shmuel and Douglis, Avron and Nirenberg, Louis},
  journal={Communications on pure and applied mathematics},
  volume={12},
  number={4},
  pages={623--727},
  year={1959},
  publisher={Wiley Online Library}
}

@article{acquistapace1992bmo,
  title={On {BMO} regularity for linear elliptic systems},
  author={Acquistapace, Paolo},
  journal={Annali di Matematica Pura ed Applicata (1923-)},
  volume={161},
  number={1},
  pages={231--269},
  year={1992},
  publisher={Springer}
}

@article{brezis1980nonlinear,
  title={Nonlinear {S}chr{\"o}dinger evolution equations},
  author={Brezis, Haim and Gallouet, Thierry},
  journal={Nonlinear Analysis: Theory, Methods \& Applications},
  volume={4},
  number={4},
  pages={677--681},
  year={1980},
  publisher={Pergamon}
}

@article{brezis1980note,
  title={A note on limiting cases of {S}obolev embeddings and convolution inequalities},
  author={Br{e}zis, Ha{\"\i}m and Wainger, Stephen},
  journal={Communications in Partial Differential Equations},
  volume={5},
  number={7},
  pages={773--789},
  year={1980},
  publisher={Taylor \& Francis}
}

@article{meliani2025p,
  title={ {$L^p$}-{$L^q$} existence for the open compressible {MHD} system},
  author={Meliani, Mostafa},
  journal={ArXiv Preprint, arXiv:2502.18164},
  year={2025}
}

@article{meliani2024well,
  title={Well-posedness and global extensibility criteria for time-fractionally damped {Jordan--Moore--Gibson--Thompson} equation},
  author={Meliani, Mostafa and Said-Houari, Belkacem},
  journal={Nonlinear Differential Equations and Applications NoDEA},
  volume={32},
  number={5},
  pages={1--26},
  year={2025},
  publisher={Springer}
}

@book{feireisl2022mathematics,
	title={Mathematics of Open Fluid Systems},
	author={Feireisl, Eduard and Novotn{\`y}, Antonin},
	year={2022},
	publisher={Springer}
}

@book{bergh2012interpolation,
	title={Interpolation Spaces: An Introduction},
	author={Bergh, J{\"o}ran and L{\"o}fstr{\"o}m, J{\"o}rgen},
	volume={223},
	year={2012},
	publisher={Springer Science \& Business Media}
}

@book{amann1995linear,
	title={Linear and Quasilinear Parabolic Problems},
	author={Amann, Herbert and others},
	volume={1},
	year={1995},
	publisher={Springer}
}

@article{fiszdon1983initial,
	title={The initial boundary value problem for the flow of a barotropic viscous fluid, global in time},
	author={Fiszdon, W and Zajaczkowski, WM and Gilbert, RP},
	journal={Applicable Analysis},
	volume={15},
	number={1-4},
	pages={91--114},
	year={1983},
	publisher={Taylor \& Francis}
}

@book{triebel1983theory,
	title={Theory of Function Spaces},
	author={Triebel, H.},
	isbn={9783034604154},
	lccn={2010925897},
	series={Modern Birkh{\"a}user Classics},
	url={https://books.google.nl/books?id=fMUMl6iLMGwC},
	year={1983},
	publisher={Springer Basel}
}

@book{triebel1992theory,
	title={Theory of Function Spaces II},
	author={Triebel, H.},
	isbn={9783764326395},
	lccn={82017858},
	series={Monographs in Mathematics},
	url={https://books.google.nl/books?id=Nf3LPOf0Q3kC},
	year={1992},
	publisher={Springer Basel}
}

@article{krylov2007parabolicelliptic,
  title={Parabolic and elliptic equations with {VMO} coefficients},
  author={Krylov, Nicolai V},
  journal={Communications in Partial Differential Equations},
  volume={32},
  number={3},
  pages={453--475},
  year={2007},
  publisher={Taylor \& Francis}
}

@article{krylov2007parabolic,
  title={Parabolic equations with {VMO} coefficients in {S}obolev spaces with mixed norms},
  author={Krylov, Nicolai V},
  journal={Journal of Functional Analysis},
  volume={250},
  number={2},
  pages={521--558},
  year={2007},
  publisher={Elsevier}
}

@article{fan2008blow,
  title={Blow-up criteria for the {N}avier--{S}tokes equations of compressible fluids},
  author={Fan, Jishan and Jiang, Song},
  journal={Journal of Hyperbolic Differential Equations},
  volume={5},
  number={01},
  pages={167--185},
  year={2008},
  publisher={World Scientific}
}

@article{DiNezza2012,
title = {Hitchhiker's guide to the fractional {S}obolev spaces},
journal = {Bulletin des Sciences Math\'ematiques},
volume = {136},
number = {5},
pages = {521-573},
year = {2012},
issn = {0007-4497},
doi = {https://doi.org/10.1016/j.bulsci.2011.12.004},
url = {https://www.sciencedirect.com/science/article/pii/S0007449711001254},
author = {Eleonora {Di Nezza} and Giampiero Palatucci and Enrico Valdinoci},
}

@book{hsiao2021boundary,
  title={Boundary Integral Equations},
  author={Hsiao, George C and Wendland, Wolfgang L},
  year={2008},
  publisher={Springer-Verlag Berlin Heidelberg}
}
\bibliographystyle{siam} 
\end{document}